\newcommand{\tikzAngleOfLine}{\tikz@AngleOfLine}
  \def\tikz@AngleOfLine(#1)(#2)#3{%
  \pgfmathanglebetweenpoints{%
    \pgfpointanchor{#1}{center}}{%
    \pgfpointanchor{#2}{center}}
  \pgfmathsetmacro{#3}{\pgfmathresult}%
  }
\newcommand{\lv}{\lvert}
\newcommand{\rv}{\rvert}
\newcommand{\X}{\xi}
\newtheorem{thm}{Theorem}[section]
\newtheorem{lemma}[thm]{Lemma}
\newtheorem{cor}[thm]{Corollary}
\theoremstyle{definition}
\newtheorem{definition}[thm]{Definition}
\newtheorem{obs}[thm]{Observation}
\newtheorem{ques}[thm]{Question}
\newcommand{\zz}{\mathbb{Z}}
\newcommand{\ind}{\mbox{$\perp \kern-5.5pt \perp$}}
\title[Random Walks Determined by a Linear Recurrence]{On Mixing Behavior of a Family 
  of Random Walks Determined by a Linear Recurrence}
\author{Caprice Stanley}
\author{Seth Sullivant}
\email{crstanl2@ncsu.edu}
\email{smsulli2@ncsu.edu}
\address{Department of Mathematics, Box 8205, North Carolina State University, Raleigh, NC, 27695-8205, USA }
\date{}
\begin{document}
\maketitle

\begin{abstract}
We study random walks on the integers mod $G_n$ that are determined by an 
integer sequence $\{ G_n \}_{n \geq 1}$ generated by a linear recurrence relation. 
Fourier analysis provides explicit formulas to compute the eigenvalues of the transition matrices 
and we use this to bound the mixing time of the random walks. 
\end{abstract}


\section{Introduction}

Let $\{G_n \}_{n \geq 1}$ be a positive increasing integer sequence given 
by the linear recurrence with constant coefficients
\[ G_n = \alpha_1G_{n-1} + \alpha_2G_{n-2} + \cdots + \alpha_dG_{n-d},\]
and $G_1=1.$
The sequence determines a family of random walks: 
for fixed $n$, consider the Markov chain $(X_t)_{t\geq 0}$ whose state 
space is $\mathcal S = \mathbb Z_{G_n}.$ The initial state is $X_0=0$ and 
from the current state $X_t$, the next state is 
\[
 X_{t+1}\equiv X_t + z_t \mod G_n, 
\]
where $z_t$ is chosen from the set $\mathcal M = \{ G_1, G_2, \ldots, G_n\} $ 
uniformly at random. So for each $n$ we have an associated Markov chain, 
more specifically, a random walk on the finite abelian group $(\mathbb Z_{G_n},+).$  
By the assumption $G_1=1$, the set $\mathcal M$ generates the group and hence the 
random walk is irreducible. Further as $G_n \in \mathcal M$, the walk is aperiodic. 
The stationary distribution $\vec{ \pi}$, 
to which the random walk converges, is uniform over $\mathcal S$.

This paper examines the number of steps required for the distribution of $X_t$
to be close to its stationary distribution.  It is well-known that this number, 
or mixing time, is governed by the second largest eigenvalue modulus (SLEM) and, 
in general, related to the collection of nontrivial eigenvalues of the transition matrix. 
In the next section, we formalize the notion of mixing time and make the relationship 
between that and eigenvalues concrete. We also introduce notations and established 
results that will be used throughout this work. Section \ref{ubsection} details explicit 
formulas for the eigenvalues and we prove that for a random walk arising from 
$\{G_n \}_{n\geq 1}$ subject to certain conditions, at most $\kappa n^2$ steps 
will suffice where $\kappa$ is some constant that depends on $\{G_n \}_{n\geq 1}$. 
Section \ref{spcasesection} focuses on random walks arising from first order recurrences. 
In that case we show that $\gamma n \log n $ steps will suffice, 
where $\gamma$ is also some constant that depends on $\{G_n \}_{n\geq 1}$. 

Our results on the eigenvalues of these Markov chains also allow us to derive lower bounds on the
mixing times, in the case that
$G_n$ grows like an exponential function.
For general linear recurrences, we have the lower bound of $\kappa n/ \log n$
and in the first order case we get a lower bound of $n$.

Random walks on the integers modulo an integer have been studied frequently,
as they are a prototypical example of a Markov chain on a group,
and are amenable to techniques based on discrete Fourier analysis.
In his review article \cite{SaloffCoste2004}, Saloff-Coste considers, among other things, 
random walks on  $\mathbb Z_p$ given by the $X_{t+1}\equiv X_t + z_t \mod p$ 
where $\mathbb P(z_t =a) = \mathbb P(z_t =b) = \frac12$ for some choice of 
$a,b \in \mathbb Z_p$. Hildebrand \cite{Hildebrand1994} considers walks on 
$\mathbb Z_p$ given by the $X_{t+1}\equiv X_t + z_t \mod p$ where $z_t$ 
is uniform on a set of $k$ random elements of $\mathbb Z_p$. He shows that 
if $n$ is prime then it suffices to take $\kappa n^{2/(k-1)}$ steps to be 
close to uniformly distributed for almost all choices of $k$ elements. 
Hildebrand also considers the case where the size of the random  step set
grows with $n$, and the situation studied in this paper provides an interesting
deterministic boundary case between Theorems 3 and 4 of  \cite{Hildebrand1994}.
Diaconis \cite{Diaconis1988} discusses various random walks on $\mathbb Z_p$ 
given by the $X_{t+1}\equiv a_tX_t + z_t \mod p$, where $a_t$ and $z_t$ 
are subject to various restrictions. 

Though we have proven these upper and lower bounds on the mixing times, we suspect, from simulations, 
that the mixing time grows like $n$ instead of $n \log n$ or $n^2 \log n$. 

The table below displays the mixing times for random walks arising from three integer sequences.

\vspace{5mm}

\hspace{10mm}
\begin{tabular}{|p{.5cm}|||p{2cm}|p{1cm}|||p{2cm}|p{1cm}|||p{2.5cm}|p{1cm}|}
\hline
 \multicolumn{7}{|c|}{Mixing Times for Three Sequences} \\
 \hline
$n$ & \tiny{$G_n=2^{n-1}$} & $t_{mix}$ & \tiny{$G_n=3^{n-1}$} & $t_{mix}$ & \tiny{$G_n=3G_{n-1}-G_{n-2}$} & $t_{mix}$ \\
\hline
1 & 1 & 0 & 1 & 0 & 1 & 0\\
 \hline
 2 & 2 & 1 & 3 & 2 & 3 & 2 \\
 \hline
 3 & 4 & 2 & 9 & 3 & 8 & 3 \\
 \hline
 4 & 8 & 2 & 27 & 3 & 21 & 3 \\
 \hline
 5 & 16 & 3 & 81 & 4 & 55 & 3\\
 \hline
 6 & 32 & 3 & 243 & 4 & 144 & 4\\
 \hline
 7 & 64 & 3 & 729 & 4 & 377 & 4 \\
 \hline
 8 & 128 & 4 & 2187 & 5 & 987 & 4 \\
 \hline
 9 & 256 & 4 & 6561 & 5 & 2584 & 4 \\
\hline
\end{tabular}
\vspace{5mm}


\section{Preliminary Results}
\label{bkg}

This section collects relevant definitions, notations, and theorems
on linear recurrences, mixing times, and Markov chains on groups that we
will use.
A more detailed study of probability and mixing time related items  
can be found in \cite{Levin2008}, and the importance of group structure
for analyzing eigenvalues of Markov chains appears in \cite{Diaconis1988}.

A standard theorem of elementary combinatorics characterizes the solutions of
linear recurrence relations  (see, e.g.~ \cite[Chapter 4]{Stanley1997}):

\begin{thm}
\label{linrec} The sequence $\{ G_n\}_{n\geq 1}$ satisfies
\[
G_n - \alpha_1G_{n-1} - \alpha_2G_{n-2} - \cdots - \alpha_dG_{n-d}=0 
\]
 exactly when for all $n \geq 0$, 
\[ 
 G_n = \sum_{i=1}^l P_i(n) \gamma_i^n 
 \]
where $1-\alpha_1x-\alpha_2x^2 - \cdots - \alpha_dx^d = \prod_{i=1}^l (1-\gamma_ix)^{d_i}$, the $\gamma_i$'s are distinct and nonzero, and each $P_i(n)$ is a polynomial of degree less than $d_i$.
\end{thm}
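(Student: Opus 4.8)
The plan is to prove Theorem~\ref{linrec} via generating functions, following the standard treatment in \cite[Chapter 4]{Stanley1997}. Introduce the formal power series $F(x) = \sum_{n \geq 0} G_n x^n \in \cc[[x]]$ and write $Q(x) = 1 - \alpha_1 x - \alpha_2 x^2 - \cdots - \alpha_d x^d$. The first step is the elementary observation, obtained by comparing coefficients of $x^n$ for $n \geq d$, that the sequence satisfies the recurrence if and only if $Q(x)F(x) = P(x)$ for some polynomial $P$ with $\deg P < d$; here the coefficients of $P$ are exactly the ``boundary'' data determined by $G_0, \ldots, G_{d-1}$, and conversely every such $P$ arises from some choice of initial terms. (The passage between ``for all $n \geq 0$'' in the closed form and ``for all $n \geq d$'' for the recurrence is harmless, since each side is pinned down by its first $d$ values.)

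For the forward direction, assume the recurrence, so $F(x) = P(x)/Q(x)$ with $\deg P < d = \sum_{i=1}^{l} d_i$. Using the factorization $Q(x) = \prod_{i=1}^{l}(1 - \gamma_i x)^{d_i}$ with the $\gamma_i$ distinct and nonzero, perform a partial fraction decomposition over $\cc$,
\[
  F(x) = \sum_{i=1}^{l} \sum_{j=1}^{d_i} \frac{c_{ij}}{(1 - \gamma_i x)^j},
\]
which exists because the factors $(1-\gamma_i x)^{d_i}$ are pairwise coprime and $\deg P < \deg Q$. Expand each summand with the identity $\frac{1}{(1-\gamma x)^j} = \sum_{n \geq 0} \binom{n+j-1}{j-1} \gamma^n x^n$ and note that $n \mapsto \binom{n+j-1}{j-1}$ is a polynomial in $n$ of degree exactly $j-1$. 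Extracting the coefficient of $x^n$ then gives $G_n = \sum_{i=1}^{l} P_i(n) \gamma_i^n$ with $P_i(n) = \sum_{j=1}^{d_i} c_{ij} \binom{n+j-1}{j-1}$, a polynomial of degree less than $d_i$, as required.

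For the converse, suppose $G_n = \sum_{i=1}^{l} P_i(n)\gamma_i^n$ for all $n \geq 0$ with the $\gamma_i$ distinct and nonzero and $\deg P_i < d_i$. Since the polynomials $\binom{n+j-1}{j-1}$ for $j = 1, \ldots, d_i$ have distinct degrees $0, 1, \ldots, d_i-1$, they are triangular with respect to the monomial basis and hence form a basis for the space of polynomials in $n$ of degree less than $d_i$; rewriting each $P_i$ in this basis shows that $F(x)$ is a sum of terms $c_{ij}/(1-\gamma_i x)^j$ as displayed above. Clearing denominators yields $F(x) = P(x)/\prod_i(1-\gamma_i x)^{d_i}$ for a polynomial $P$ with $\deg P < d$, so $Q(x)F(x) = P(x)$ has degree less than $d$, and the recurrence holds by the first step.

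The only genuinely substantive ingredients are the existence of the partial fraction decomposition and the triangularity argument identifying $\{\binom{n+j-1}{j-1}\}$ as a basis; I expect the main nuisance to be purely clerical, namely keeping the index ranges for the recurrence versus the closed form aligned, rather than any real obstacle. As an alternative that avoids generating functions, one can argue by linear algebra: the solution space of the recurrence is $d$-dimensional, one checks directly that $(E-\gamma_i)^{d_i}$ annihilates the sequences $n^k\gamma_i^n$ for $k < d_i$ (where $E$ is the shift operator and $\lambda^d - \alpha_1\lambda^{d-1} - \cdots - \alpha_d = \prod_i(\lambda-\gamma_i)^{d_i}$), and invokes linear independence of exponential-polynomials to conclude these $d$ sequences form a basis.
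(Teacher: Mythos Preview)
The paper does not actually prove Theorem~\ref{linrec}; it is stated as a standard result with a reference to \cite[Chapter 4]{Stanley1997}. Your generating-function argument is correct and is precisely the proof given in that reference, so there is nothing to compare --- you have supplied exactly the argument the paper defers to.
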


One consequence we make frequent use of is that there will exist a $\kappa_1 > 0$ such that
$\log G_n  \leq  \kappa_1 n$ for all $n$.  We say that the sequence $\{G_n\}$ exhibits \emph{exponential growth}
if there exists $\kappa_2 > 0$ such that $\kappa_2 n  \leq \log G_n$ for all sufficiently large $n$.

For a Markov chain with transition matrix $P$ a \emph{stationary distribution} $\pi$ is a distribution that satisfies 
$\pi = \pi P.$   For Markov chains that are irreducible and aperiodic, as
ours is, there exists a unique stationary distribution, and any starting distribution
converges to the unique stationary distribution as $t$ goes to infinity.
One of the main tasks in the analysis of a Markov chain is to describe how close the random walk
$(X_t)_{t\geq 0}$ is to its stationary distribution after $t$ steps. 
For this purpose, it is standard to work with  the \emph{total variation distance} on probability distributions:

\begin{definition}
For probability distributions $\mu$ and $\eta$ on the set $\mathcal S$, \emph{total variation distance} is
\[ 
\| \mu-\eta \|_{TV}=\frac 12 \sum_{x \in \mathcal S} \lv \mu(x)-\eta(x) \rv.
 \]
\end{definition}
If $P_0^t$ is the distribution of $(X_t)_{t\geq 0}$ at time $t$, 
then we will be interested in the value $\| P_0^t-\pi \|_{TV}$. 
Moreover we would like to know when $\| P_0^t-\pi \|_{TV}$
 is small as this will translate to the walk being ``close to stationarity." 

\begin{definition}
The \emph{mixing time} for the random walk $(X_t)_{t\geq 0}$ is 
$t_{mix}(\epsilon) = \min\{ t : \| P_0^t-\pi \| \leq \epsilon \}$. 
By convention we let $t_{mix}=t_{mix}(1/4)$.
\end{definition}

The following lemma is a rephrasing of the Upper Bound Lemma which allows us to use a sum involving the eigenvalues of the transition matrix as an approximation for the distance to stationarity at time $t$. 

\begin{lemma}[Upper Bound Lemma, \cite{Diaconis1981}]
\label{ubl}
Let $P_0^t$ be the $t$-step distribution of the random walk on the finite abelian group $(\mathbb Z_{G_n}, +)$ as described previously and let $\pi$ be the uniform distribution over $\mathbb Z_{G_n}$. Then,
\begin{align*}
\| P_0^t -\pi \|_{TV}^2 \leq \frac 14 \sum_{k=1}^{G_n-1} |\lambda_k |^{2t},
\end{align*}
where $\lambda_k$'s are nontrivial eigenvalues of the transition matrix of the random walk.
\end{lemma}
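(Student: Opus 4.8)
The plan is to prove the Upper Bound Lemma by combining the standard Cauchy--Schwarz bound relating total variation distance to $L^2$ distance with the Plancherel/Fourier theory for random walks on finite abelian groups. Since the walk lives on $(\zz_{G_n},+)$, the eigenvectors of the transition matrix are precisely the characters $\chi_k(x) = e^{2\pi i k x / G_n}$ for $k = 0, 1, \ldots, G_n - 1$, and the corresponding eigenvalues are the Fourier coefficients $\lambda_k = \hat{\mu}(k) = \sum_{z \in \cm} \mu(z) \chi_k(z)$ of the increment distribution $\mu$ (here $\mu$ is uniform on $\cm$). The trivial eigenvalue is $\lambda_0 = 1$, attained by the constant character; the remaining $G_n - 1$ eigenvalues are the nontrivial $\lambda_k$ referenced in the statement.

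The key steps, in order, are as follows. First I would invoke the elementary inequality
\[
\| P_0^t - \pi \|_{TV}^2 \;=\; \frac14\Bigl(\sum_{x} |P_0^t(x) - \pi(x)|\Bigr)^2 \;\leq\; \frac{|\mathcal S|}{4} \sum_{x}\bigl(P_0^t(x) - \pi(x)\bigr)^2,
\]
which is just Cauchy--Schwarz over the $G_n$ states. Second, I would apply Plancherel's theorem on $\zz_{G_n}$: for any function $f$ on the group, $\sum_x |f(x)|^2 = \frac{1}{G_n}\sum_{k} |\hat f(k)|^2$. Applying this to $f = P_0^t - \pi$ and using that the Fourier transform of $\pi$ kills all nontrivial frequencies (while matching $P_0^t$ at $k=0$), the $k=0$ term cancels and we are left with $\sum_x (P_0^t(x)-\pi(x))^2 = \frac{1}{G_n}\sum_{k=1}^{G_n-1} |\widehat{P_0^t}(k)|^2$. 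Third, I would compute $\widehat{P_0^t}(k)$: since $X_t$ is a sum of $t$ i.i.d.\ increments (starting from $0$), its distribution is the $t$-fold convolution $\mu^{*t}$, so $\widehat{P_0^t}(k) = \hat\mu(k)^t = \lambda_k^t$, and hence $|\widehat{P_0^t}(k)|^2 = |\lambda_k|^{2t}$. Combining the three steps: $\| P_0^t - \pi\|_{TV}^2 \leq \frac{G_n}{4}\cdot \frac{1}{G_n}\sum_{k=1}^{G_n-1}|\lambda_k|^{2t} = \frac14 \sum_{k=1}^{G_n-1}|\lambda_k|^{2t}$, as claimed.

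The main obstacle — really the only nontrivial point, since this is a known lemma (Diaconis--Shahshahani) — is setting up the Fourier-analytic bookkeeping correctly: being careful that $X_0 = 0$ so that $P_0^t = \mu^{*t}$ exactly (no extra shift), correctly identifying the characters as the simultaneous eigenvectors of the transition matrix because translation operators on an abelian group are diagonalized by characters, and verifying that the $\lambda_k$ produced this way are exactly the eigenvalues appearing in the statement. Once the identification $\lambda_k = \hat\mu(k)$ is in place and Plancherel is applied, the inequality follows immediately. I would also remark that this reformulation is convenient for the sequel because, for the walk determined by $\{G_n\}$, the eigenvalue $\lambda_k$ has the explicit closed form $\frac{1}{n}\sum_{j=1}^{n} \cos(2\pi k G_j / G_n)$ (after pairing $\pm$ contributions), which is precisely what Section~\ref{ubsection} will exploit.
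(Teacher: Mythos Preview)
Your argument is the standard Cauchy--Schwarz/Plancherel proof of the Diaconis--Shahshahani Upper Bound Lemma and is correct. Note that the paper does not actually prove Lemma~\ref{ubl}: it is stated with a citation to \cite{Diaconis1981} and used as a black box, so there is no ``paper's proof'' to compare against; your write-up simply supplies the classical justification.

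One small correction to your closing remark: for the walk in this paper the step set is $\mathcal M = \{G_1,\ldots,G_n\}$, with no $\pm$ symmetrization, so the eigenvalues are the genuinely complex sums $\lambda_k = \frac{1}{n}\sum_{j=1}^{n} \xi_{G_n}^{kG_j}$ as displayed just before Section~\ref{ubsection}, not the cosine sum you wrote. There is no ``pairing of $\pm$ contributions'' here; it is only $|\lambda_k|$ that is real, and that is what Section~\ref{ubsection} bounds. This does not affect the proof of the lemma itself, only the aside at the end of your proposal.
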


Lemma \ref{ubl}, combined with bounds on the eigenvalues of the transition matrices can be used
to get upper bounds on the mixing times of random walks over our finite group.
Similarly, lower bounds on the largest nontrivial eigenvalue modulus can give lower bounds
on the mixing time:

\begin{lemma}
\label{relaxlb}

For the random walk on the finite abelian group $(\mathbb Z_{G_n}, +)$ as described previously with transition matrix $P$,
\[ t_{mix}(\epsilon) \geq (\tfrac{1}{1-\lambda_*}-1)\log(\tfrac{1}{2 \epsilon})
\]
where $\lambda_*=\max \{ \lv \lambda \rv : \lambda \text{ is an eigenvalue of } P , \lambda \neq 1 \}$.


\end{lemma}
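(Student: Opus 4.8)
The plan is to prove this standard relaxation-time lower bound by testing the convergence against a well-chosen eigenfunction of $P$. Since $P$ is the transition matrix of a random walk on the abelian group $(\zz_{G_n},+)$, it is diagonalized by the characters of the group, and the eigenvalue $\lambda_*$ is attained by some character $\chi$ with $P\chi = \lambda_* \chi$ (or, if $\lambda_*$ is negative or complex, by a character whose eigenvalue has modulus $\lambda_*$; the argument below uses only $|\lambda_*|$). Because $\chi$ is a nontrivial character, $\sum_{x} \chi(x) = 0$, so the stationary (uniform) expectation of $\chi$ is zero, i.e.\ $\pi(\chi) = 0$.

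First I would record the one-step contraction: for any eigenfunction $f$ with $Pf = \lambda f$ and any starting distribution, $P_0^t(f) = \mathbb{E}[f(X_t)] = \lambda^t f(0)$ by iterating $P$. Taking $f = \chi$ and $f(0) = \chi(0) = 1$ gives $|P_0^t(\chi)| = |\lambda_*|^t$. Next I would bound the left-hand side by total variation distance: for any function $f$ bounded by $1$ in modulus, $|P_0^t(f) - \pi(f)| = |\sum_x (P_0^t(x) - \pi(x)) f(x)| \le \sum_x |P_0^t(x) - \pi(x)| = 2 \| P_0^t - \pi\|_{TV}$. Since $|\chi(x)| = 1$ everywhere and $\pi(\chi) = 0$, this yields
\[
|\lambda_*|^t = |P_0^t(\chi)| \le 2\,\| P_0^t - \pi\|_{TV}.
\]
Hence at the mixing time $t = t_{mix}(\epsilon)$ we have $|\lambda_*|^{t_{mix}(\epsilon)} \le 2\epsilon$, so $t_{mix}(\epsilon) \log(1/|\lambda_*|) \ge \log(1/(2\epsilon))$, giving $t_{mix}(\epsilon) \ge \log(1/(2\epsilon)) / \log(1/|\lambda_*|)$.

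Finally I would convert the $\log(1/|\lambda_*|)$ denominator into the stated form. Using the elementary inequality $\log(1/x) = -\log x \le (1-x)/x = 1/x - 1$ for $x \in (0,1]$ (equivalently $\log x \ge 1 - 1/x$), applied with $x = |\lambda_*|$, we get $\log(1/|\lambda_*|) \le \tfrac{1}{|\lambda_*|} - 1 = \tfrac{1-|\lambda_*|}{|\lambda_*|}$, and since $|\lambda_*| \le 1$ this is at most... — more directly, $1/\log(1/|\lambda_*|) \ge |\lambda_*|/(1-|\lambda_*|)$, which is slightly weaker than the claimed $(\tfrac{1}{1-\lambda_*} - 1) = \tfrac{\lambda_*}{1-\lambda_*}$ only when $\lambda_*$ is interpreted as the modulus; so I would in fact use the sharper bound $\log(1/x) \le \tfrac{1-x^2}{2x}$ or simply note that the clean statement follows by writing $\tfrac{1}{1-\lambda_*} - 1 = \tfrac{\lambda_*}{1-\lambda_*} \ge \tfrac{1}{\log(1/\lambda_*)}$ from $\log(1/\lambda_*) \le (1-\lambda_*)/\lambda_*$. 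Combining this with the displayed bound on $t_{mix}(\epsilon)$ gives the result.

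The only mild subtlety — and the place to be careful — is the interplay between $\lambda_*$ being defined as a modulus of a possibly negative or complex eigenvalue and the monotone bound $\log(1/x) \le 1/x - 1$, which needs $x \in (0,1]$; since $|\lambda_*| \in [0,1)$ for an irreducible aperiodic chain this is fine, but one should make sure the eigenfunction test is applied to the eigenvalue realizing the maximum modulus (tracking $|\lambda|^t$ throughout rather than $\lambda^t$), and handle the degenerate case $\lambda_* = 0$ trivially. Everything else is a direct computation.
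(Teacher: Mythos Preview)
Your proof is correct and is precisely the standard relaxation-time argument the paper invokes: the paper does not prove Lemma~\ref{relaxlb} itself but cites \cite{Levin2008} and remarks that the same proof goes through because $P$ has an orthonormal basis of eigenfunctions---exactly the characters you use. Your final paragraph wanders a little, but the inequality $\log(1/\lambda_*)\le(1-\lambda_*)/\lambda_*$ is the right one and yields $t_{mix}(\epsilon)\ge \log(1/(2\epsilon))/\log(1/\lambda_*)\ge \tfrac{\lambda_*}{1-\lambda_*}\log(1/(2\epsilon))$ as claimed.
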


Lemma \ref{relaxlb} also holds for reversible, irreducible, aperiodic Markov chains, 
a proof of which can be found in \cite{Levin2008}. 
The same proof holds in our case since $P$ has an orthonormal basis 
of eigenfunctions with respect to the standard complex inner product 
$\langle f,g \rangle=\frac{1}{ G_n } \sum_{x\in \mathbb Z_{G_n}}f(x)\overline{g(x)}.$

The transition matrix $P$ for the random walk $(X_t)_{t \geq 0}$
associated to the sequence $\{G_n\}_{n \geq 1}$
is the $G_n \times G_n$ circulant matrix whose $ij$-th entry is 

\[ P_{ij}=
\begin{cases}
\frac 1n &  \text{ if } j-i \mod{G_n} \in \mathcal M \\
0 & \text{ otherwise}.
\end{cases}
\]

Since $P$ is a circulant matrix its eigenvalues $\lambda_1, \ldots, \lambda_{G_n}$ can be written explicitly. Let $\X_{G_n}= \exp(\frac{2\pi i}{ G_n})$ be a primitive $G_n$-th root of unity, then 
\begin{align} 
\lambda_k=\frac1n \sum_{i=1}^{n} \X_{G_n}^{kG_i} & \phantom{ee} \text{ for } k=1,2, \ldots G_n.
\end{align}
This formula for the eigenvalues plays an important role in the results that follow.

\section{General Linear Recurrences}
\label{ubsection}

In this section, we prove bounds on nontrivial eigenvalue moduli for linear recurrence relations of arbitrary order.  From this we are
able to deduce lower and upper bounds on the mixing time of the Markov
chain.  In the next section, we specialize to the case of first order linear
recurrences, where we are able to prove stronger upper and lower bounds.

The main result of this section is the following:

\begin{thm} \label{gencaseup} 
For the random walk determined by the linear recurrence $\{ G_n\}_{n\geq 1}$ with $G_1=1$,
the mixing time satisfies:
\[ t_{mix}(\epsilon) \leq \kappa n \log(G_n-1)- \kappa n \log(4\epsilon^2), \phantom{e} \text{ where } \kappa=\frac{1}{4-4\cos(\frac{\pi}{s+1})}. \] 
\end{thm}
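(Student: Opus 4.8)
The plan is to combine the Upper Bound Lemma (Lemma~\ref{ubl}) with a uniform bound on the nontrivial eigenvalue moduli $|\lambda_k|$, and then solve for the number of steps $t$ that forces the right-hand side below $\epsilon^2$. The notation $s$ in the statement of $\kappa$ must be the parameter governing how quickly the terms $\xi_{G_n}^{kG_i}$ can ``spread out'' around the unit circle; I expect that $s$ is introduced in the lemma that precedes this theorem in the full paper (bounding $|\lambda_k|$), so I will take as given an inequality of the shape $|\lambda_k| \le 1 - \tfrac{1}{n}\bigl(1-\cos\tfrac{\pi}{s+1}\bigr)$ valid for every nontrivial $k$, i.e.\ for $k = 1, \dots, G_n-1$. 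The point of such a bound is that among the $n$ summands $\xi_{G_n}^{kG_1}, \dots, \xi_{G_n}^{kG_n}$ defining $\lambda_k = \tfrac1n\sum_{i=1}^n \xi_{G_n}^{kG_i}$, at least one must have real part at most $\cos\tfrac{\pi}{s+1}$ (this is where the recurrence structure and the value $s$ enter), so $\operatorname{Re}\lambda_k \le \tfrac{1}{n}\bigl((n-1) + \cos\tfrac{\pi}{s+1}\bigr) = 1 - \tfrac{1}{n}\bigl(1 - \cos\tfrac{\pi}{s+1}\bigr)$, and since $\lambda_k$ lies in the unit disk this also controls $|\lambda_k|$.

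First I would record the elementary inequality $|\lambda_k|^2 \le 1 - \tfrac{2}{n}\bigl(1-\cos\tfrac{\pi}{s+1}\bigr) + \tfrac{1}{n^2}(\cdots)$, or more cleanly use $|\lambda_k| \le 1 - c/n$ with $c = 1-\cos\tfrac{\pi}{s+1}$ and then $|\lambda_k|^{2t} \le e^{-2ct/n}$ via $1-x \le e^{-x}$. Plugging into Lemma~\ref{ubl},
\[
\| P_0^t - \pi \|_{TV}^2 \;\le\; \frac14 \sum_{k=1}^{G_n-1} |\lambda_k|^{2t} \;\le\; \frac{G_n-1}{4}\, e^{-2ct/n}.
\]
Next I would set the right-hand side equal to $\epsilon^2$ and solve for $t$: requiring $\tfrac{G_n-1}{4}e^{-2ct/n} \le \epsilon^2$ is equivalent to $e^{2ct/n} \ge \tfrac{G_n-1}{4\epsilon^2}$, i.e.\ $t \ge \tfrac{n}{2c}\bigl(\log(G_n-1) - \log(4\epsilon^2)\bigr)$. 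Since $\tfrac{1}{2c} = \tfrac{1}{2(1-\cos\frac{\pi}{s+1})} = \tfrac{1}{2-2\cos\frac{\pi}{s+1}}$, this does not immediately match the claimed constant $\kappa = \tfrac{1}{4-4\cos\frac{\pi}{s+1}}$, which is half of mine; I would reconcile this by using the sharper bound $|\lambda_k|^{2} \le 1 - \tfrac{4c}{n}$ coming from squaring (so that the exponent picks up an extra factor of $2$), or by whatever normalization the preceding eigenvalue lemma actually supplies — in any case the mechanics are identical and only the constant shifts. Concluding, for any $t \ge \kappa n \log(G_n-1) - \kappa n \log(4\epsilon^2)$ we get $\|P_0^t - \pi\|_{TV} \le \epsilon$, which is exactly $t_{mix}(\epsilon) \le \kappa n\log(G_n-1) - \kappa n \log(4\epsilon^2)$.

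The main obstacle is not the analytic manipulation — that is a two-line application of $1-x\le e^{-x}$ and a logarithm — but rather establishing (or correctly invoking) the uniform eigenvalue bound $|\lambda_k| \le 1 - c/n$ for all nontrivial $k$, together with pinning down exactly what $s$ is and why $\operatorname{Re}\xi_{G_n}^{kG_i} \le \cos\tfrac{\pi}{s+1}$ for at least one index $i$ regardless of $k$. This is the genuinely recurrence-theoretic part: it should follow from the fact that the residues $kG_i \bmod G_n$, as $i$ ranges over a window of length roughly $s+1$ determined by the order/structure of the recurrence, cannot all be concentrated near $0 \bmod G_n$, so one of the corresponding roots of unity is pushed at least an angle $\tfrac{\pi}{s+1}$ away from $1$. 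Once that lemma is in hand, the rest of the proof is the bookkeeping sketched above, and the only care needed is to match the constant in front ($\kappa$ versus $1/(2c)$) to the precise statement of that lemma.
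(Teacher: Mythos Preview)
Your overall strategy is exactly the paper's: invoke a uniform bound $|\lambda_k|\le 1-c/n$ for all nontrivial $k$, apply $1-x\le e^{-x}$ and the Upper Bound Lemma to get $\|P_0^t-\pi\|_{TV}^2\le\frac14(G_n-1)e^{-2ct/n}$, and solve for $t$. However, the eigenvalue step you sketch has a genuine error, and it is precisely this error that produces your constant mismatch. Bounding $\operatorname{Re}\lambda_k$ does \emph{not} bound $|\lambda_k|$: a point in the closed unit disk with real part $1-\delta$ can still have modulus $1$, so ``since $\lambda_k$ lies in the unit disk this also controls $|\lambda_k|$'' is false. Neither of your proposed reconciliations works either; squaring $1-c/n$ gives $1-2c/n+O(1/n^2)$, not $1-4c/n$.

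What the paper actually does (Lemma~\ref{eigmodup}, with $s=\sum_{j:\alpha_j>0}\alpha_j$) resolves both the modulus issue and the missing factor of two at once. Rather than isolating a single summand's real part, one pairs some summand $\xi_{G_n}^{kG_j}$ with the trivial term $\xi_{G_n}^{kG_n}=1$ and bounds the modulus of their \emph{sum}: for each nontrivial $k$ there is a $j<n$ with $|\xi_{G_n}^{kG_j}+1|\le\sqrt{2+2\cos\tfrac{2\pi}{s+1}}=2\lvert\cos\tfrac{\pi}{s+1}\rvert$. The triangle inequality then yields
\[
|\lambda_k|\;\le\;\tfrac1n\Bigl(n-2+2\lvert\cos\tfrac{\pi}{s+1}\rvert\Bigr)\;=\;1-\tfrac{2}{n}\bigl(1-\lvert\cos\tfrac{\pi}{s+1}\rvert\bigr),
\]
so the correct $c$ is $2(1-\cos\tfrac{\pi}{s+1})$, and your own computation then gives $\tfrac{n}{2c}=\tfrac{1}{4-4\cos\frac{\pi}{s+1}}=\kappa$ exactly.
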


Note that for large $n$, there is a constant $\kappa_1$ such that
$\log(G_{n}-1)  \leq \kappa_1 n$.  So from this bound we have the 
following corollary.

\begin{cor}
For the random walk determined by the linear recurrence $\{ G_n\}_{n\geq 1}$ with $G_1=1$,
$t_{mix}  \leq  \gamma n^2$ for some $\gamma$.  
\end{cor}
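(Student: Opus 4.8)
The plan is to specialize Theorem \ref{gencaseup} at $\epsilon = 1/4$ and then feed in the linear growth estimate $\log G_n \leq \kappa_1 n$ recorded immediately after Theorem \ref{linrec}. By the stated convention, $t_{mix} = t_{mix}(1/4)$, so I would substitute $\epsilon = 1/4$ directly into the inequality of Theorem \ref{gencaseup}. The additive term becomes $-\kappa n \log\!\bigl(4 \cdot (1/4)^2\bigr) = -\kappa n \log(1/4) = \kappa n \log 4$, which is a \emph{positive} multiple of $n$. This yields
\[ t_{mix} \leq \kappa n \log(G_n - 1) + \kappa n \log 4. \]

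Next I would bound the $\log(G_n - 1)$ factor. Since $G_n - 1 \leq G_n$ we have $\log(G_n - 1) \leq \log G_n$, and the remark following Theorem \ref{linrec} supplies a constant $\kappa_1 > 0$ with $\log G_n \leq \kappa_1 n$ for all $n$; this is a direct consequence of the closed form $G_n = \sum_{i} P_i(n)\gamma_i^n$, whose magnitude grows at most exponentially in $n$. Substituting gives
\[ t_{mix} \leq \kappa \kappa_1 n^2 + \kappa n \log 4. \]

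Finally I would absorb the linear term into the quadratic one: for $n \geq 1$ we have $n \leq n^2$, so $\kappa n \log 4 \leq \kappa(\log 4) n^2$, and therefore $t_{mix} \leq \gamma n^2$ with $\gamma = \kappa(\kappa_1 + \log 4)$. The lone edge case is $n = 1$, where the hypothesis $G_1 = 1$ makes the state space $\mathbb Z_{G_1} = \mathbb Z_1$ trivial and forces $t_{mix} = 0$, so the bound holds vacuously there; for $n \geq 2$ the argument above applies verbatim.

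Since each step is either a substitution or an application of an inequality already established earlier in the paper, I do not expect a genuine obstacle. The only points requiring care are bookkeeping the sign of $\log(1/4)$, so that the resulting additive constant term is correctly identified as positive and of order $n$, and confirming that the growth bound from Theorem \ref{linrec} is the \emph{for all $n$} version rather than merely an asymptotic one, which is what lets me dispense with a separate treatment of small $n$.
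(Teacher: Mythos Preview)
Your proposal is correct and follows exactly the route the paper indicates: specialize Theorem~\ref{gencaseup} at $\epsilon=1/4$ and invoke the bound $\log G_n \leq \kappa_1 n$ stated after Theorem~\ref{linrec}. The paper treats this corollary as immediate and gives no further detail, so your careful handling of the sign of $\log(1/4)$, the absorption of the linear term, and the $n=1$ edge case is more than is strictly required.
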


The overall strategy to prove Theorem \ref{gencaseup} is to bound the modulus of the eigenvalues 
of the transition matrix and then appeal to Lemma \ref{ubl}. We first establish a few lemmas. 

\begin{lemma}
\label{angleslemma}
Let $a >0$ be some real number. If $\theta \in [\frac{2\pi}{a+1}, \frac{2\pi a}{a+1}]$ then 
\[
\lv 1 + \exp(\theta i) \rv \leq \lv 1 + \exp(\tfrac{2\pi i}{a+1}) \rv.
\] 
\end{lemma}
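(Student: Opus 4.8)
The key observation is that $\lv 1 + \exp(\theta i) \rv$ depends only on $\cos\theta$, and in a monotone way. Concretely, $\lv 1 + \exp(\theta i)\rv^2 = (1+\cos\theta)^2 + \sin^2\theta = 2 + 2\cos\theta$, so $\lv 1 + \exp(\theta i)\rv = \sqrt{2 + 2\cos\theta}$. Thus the quantity is a decreasing function of $\cos\theta$... wait, it is increasing in $\cos\theta$; equivalently, on $[0, 2\pi]$ it is symmetric about $\theta = \pi$ and decreasing on $[0,\pi]$, increasing on $[\pi, 2\pi]$. The plan is therefore to reduce the inequality to a statement about how far $\theta$ is from $\pi$, i.e. about $\lv \theta - \pi \rv$.

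First I would record the identity $\lv 1 + \exp(\theta i)\rv = 2\lv\cos(\theta/2)\rv$ (or equivalently $\sqrt{2+2\cos\theta}$), valid for all real $\theta$. Then the inequality to prove becomes $\lv\cos(\theta/2)\rv \leq \lv\cos(\tfrac{\pi}{a+1})\rv$. Next I would note that for $\theta \in [\tfrac{2\pi}{a+1}, \tfrac{2\pi a}{a+1}]$ we have $\theta/2 \in [\tfrac{\pi}{a+1}, \tfrac{\pi a}{a+1}] = [\tfrac{\pi}{a+1}, \pi - \tfrac{\pi}{a+1}]$, an interval centered at $\pi/2$ and contained in $[0,\pi]$ (since $a>0$ forces $\tfrac{\pi}{a+1} < \pi$, and the interval is nonempty precisely when $a \geq 1$; for $0 < a < 1$ the interval is empty and the claim is vacuous, so I may assume $a \geq 1$). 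On $[0,\pi]$, $\lv\cos\rv$ is maximized at the endpoints and the relevant interval's endpoints are $\tfrac{\pi}{a+1}$ and $\pi - \tfrac{\pi}{a+1}$, which are symmetric about $\pi/2$ and hence have equal values of $\lv\cos\rv$. So $\max_{\theta/2 \in [\pi/(a+1),\,\pi - \pi/(a+1)]} \lv\cos(\theta/2)\rv = \cos(\tfrac{\pi}{a+1})$, which is exactly the desired bound.

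The only genuinely delicate point is the monotonicity bookkeeping: $\lv\cos(x)\rv$ is not monotone on $[0,\pi]$, so I need the observation that it is decreasing on $[0,\pi/2]$ and increasing on $[\pi/2,\pi]$, and that the interval $[\tfrac{\pi}{a+1}, \pi - \tfrac{\pi}{a+1}]$ is symmetric about $\pi/2$; the supremum on such an interval is then attained at either endpoint and equals $\cos(\tfrac{\pi}{a+1})$. I would also double-check the edge behavior $a = 1$, where the interval degenerates to the single point $\theta = \pi$, giving $\lv 1 + \exp(\pi i)\rv = 0 \leq \lv 1 + \exp(\pi i/1 \cdot \tfrac{1}{2})\rv$... more precisely $\lv 1 + \exp(\tfrac{2\pi i}{2})\rv = 0$, consistent. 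None of this requires more than elementary trigonometry, so the proof is short once the reduction $\lv 1 + \exp(\theta i)\rv = 2\lv\cos(\theta/2)\rv$ is in hand.
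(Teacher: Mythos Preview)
Your argument is correct and essentially the same as the paper's: both compute $\lv 1+\exp(\theta i)\rv=\sqrt{2+2\cos\theta}$ (you use the equivalent half-angle form $2\lv\cos(\theta/2)\rv$) and then observe that on the interval $[\tfrac{2\pi}{a+1},\tfrac{2\pi a}{a+1}]$, which is symmetric about $\pi$, the cosine is maximized at the endpoints. The paper states this in one line as $\cos\theta\le\cos(\tfrac{2\pi}{a+1})$, while you unpack the monotonicity bookkeeping more carefully, but the content is identical.
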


\begin{proof} If $\theta \in [\frac{2\pi}{a+1}, \frac{2\pi a}{a+1}]$ then $\cos(\theta) \leq \cos(\frac{2 \pi}{a+1})$ so 
\begin{align*}
\lv 1 + \exp(\theta i) \rv  & = \sqrt{2 + 2\cos(\theta)} \\
 & \leq \sqrt{2 + 2\cos(\tfrac{2 \pi}{a+1})} \\
 & = \lv 1 + \exp(\tfrac{2\pi i}{a+1}) \rv.   \qedhere
\end{align*}
\end{proof}

\noindent 
Now for each $G_i$ we identify a subset $A_i$ of $[0, 2\pi]$. Let
\[ A_i := \bigcup_{m=0}^{G_i-1}  \left[ \frac{2\pi}{(s+1)G_i} + \frac{2\pi m}{G_i},\frac{2\pi s}{(s+1)G_i}+\frac{2\pi m}{G_i} \right], \text{ where } s=\sum_{j:\alpha_j > 0} \alpha_j. \]
Notice that each $A_i$ satisfies the property that if the angle $\frac{2\pi k}{G_n}$ is in $ A_i$, then $\tfrac{2\pi kG_i}{G_n} \mod{2\pi} \in [\tfrac{2\pi}{s+1}, \tfrac{2\pi s}{s+1}].$ 

\begin{lemma} \label{aisinterval} 
If $\mathcal A= \cup_{i=1}^{n-1} A_i$ then $\mathcal A= \left[ \frac{2\pi}{(s+1)G_{n-1}} ,\frac{2\pi ((s+1)G_{n-1}-1)}{(s+1)G_{n-1}}  \right]$.
\end{lemma}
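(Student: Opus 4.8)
My plan is to compute the union $\mathcal{A} = \bigcup_{i=1}^{n-1} A_i$ by working from the bottom up, starting with $A_{n-1}$ (the coarsest-looking set, whose intervals are spaced $\frac{2\pi}{G_{n-1}}$ apart), and then showing that each successive $A_i$ for $i < n-1$ fills in the gaps left between the intervals of $A_{i+1}, \ldots, A_{n-1}$. The key numerical facts I would isolate first are: (i) each $A_i$ is a disjoint union of $G_i$ closed intervals, the $m$-th of which is $I_{i,m} := \left[\frac{2\pi}{(s+1)G_i} + \frac{2\pi m}{G_i}, \frac{2\pi s}{(s+1)G_i} + \frac{2\pi m}{G_i}\right]$, so consecutive intervals within $A_i$ have a gap of length $\frac{2\pi}{(s+1)G_i} \cdot 2 = \frac{4\pi}{(s+1)G_i}$ wait — more precisely the gap between $I_{i,m}$ and $I_{i,m+1}$ runs from $\frac{2\pi s}{(s+1)G_i} + \frac{2\pi m}{G_i}$ to $\frac{2\pi}{(s+1)G_i} + \frac{2\pi(m+1)}{G_i}$, of length $\frac{2\pi(s-1)}{(s+1)G_i}$ below and $\frac{2\pi}{(s+1)G_i}$ of fresh material at the start; and (ii) the ratio $G_{i+1}/G_i$ is bounded, which controls how finely $A_i$ subdivides relative to $A_{i+1}$.

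The cleaner route, which I would actually pursue, is an inductive claim: for each $j$ with $1 \le j \le n-1$, the union $\bigcup_{i=j}^{n-1} A_i$ equals the interval $\left[\frac{2\pi}{(s+1)G_{n-1}}, \frac{2\pi((s+1)G_{n-1}-1)}{(s+1)G_{n-1}}\right]$ \emph{minus} a finite union of small ``residual'' gaps, each of length at most $\frac{2\pi}{(s+1)G_{j-1}}$ (or some comparable quantity), and then show that adding $A_{j-1}$ covers all such gaps because the intervals of $A_{j-1}$ have length $\frac{2\pi(s-1)}{(s+1)G_{j-1}} \ge \frac{2\pi}{(s+1)G_{j-1}}$ and recur with period $\frac{2\pi}{G_{j-1}}$, so no point can escape two consecutive intervals of $A_{j-1}$ by a gap that small. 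The base case $j = n-1$ is just the description of $A_{n-1}$ itself. I would also need to check the two endpoints: the left endpoint $\frac{2\pi}{(s+1)G_{n-1}}$ is exactly the start of $I_{n-1,0}$, and the right endpoint $2\pi - \frac{2\pi}{(s+1)G_{n-1}}$ is exactly the end of $I_{n-1, G_{n-1}-1}$, so $\mathcal{A}$ is contained in the claimed interval and reaches both ends; the content of the lemma is that nothing in between is missed.

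The main obstacle I anticipate is making the ``gap-filling'' step rigorous: I must verify that the leftover gaps after taking $\bigcup_{i=j}^{n-1} A_i$ are genuinely shorter than the intervals of $A_{j-1}$ \emph{and} are positioned so that some interval of $A_{j-1}$ straddles each gap — it is not enough for the intervals to be long, they must also be offset correctly. This is where the specific phase offset $\frac{2\pi}{(s+1)G_i}$ in the definition of $A_i$ matters, and where I expect to use that $G_i \mid G_{i+1}$-type divisibility is \emph{not} available in general, so I would instead argue modularly: reduce everything mod $\frac{2\pi}{G_{j-1}}$ and show the image of each residual gap lands inside the single interval $\left[\frac{2\pi}{(s+1)G_{j-1}}, \frac{2\pi s}{(s+1)G_{j-1}}\right]$ up to a translate. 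Once that covering inclusion is established for each $j$ down to $j=1$, the case $j = 1$ gives $\mathcal{A} = \bigcup_{i=1}^{n-1} A_i$ with no residual gaps, i.e.\ the full interval, completing the proof. I would present the argument as a single induction with the interval-length inequality $\frac{s-1}{s+1} \ge \frac{1}{s+1}$ (equivalently $s \ge 2$, which holds since $G_1 = 1$ forces the recurrence to have $s \ge 1$, and $s=1$ would need a separate trivial check) as the quantitative heart.
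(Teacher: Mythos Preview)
Your induction runs in the wrong direction, and the resulting ``gap-filling'' step is not just an obstacle you have yet to write out --- it is the entire content of the lemma, and your proposed modular argument does not supply it. Starting from $A_{n-1}$ leaves you with $G_{n-1}-1$ internal gaps; after intersecting with the gap structure of $A_{n-2},\ldots,A_j$ the residual gaps sit at positions governed by all of $G_j,\ldots,G_{n-1}$ at once. For $A_{j-1}$ to cover such a gap you need the gap to lie inside one of the intervals $I_{j-1,m}$, which is a \emph{positional} constraint: reducing mod $\tfrac{2\pi}{G_{j-1}}$, the gap must land in $[\tfrac{2\pi}{(s+1)G_{j-1}},\tfrac{2\pi s}{(s+1)G_{j-1}}]$. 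But there is no divisibility relation among the $G_i$, so the reduction of a residual gap mod $\tfrac{2\pi}{G_{j-1}}$ is not controlled by anything you have assumed, and your length inequality $\tfrac{s-1}{s+1}\ge\tfrac{1}{s+1}$ is irrelevant to where the gap lands. The sketch therefore does not close.

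The paper sidesteps all of this by inducting upward from $i=1$. Because $G_1=1$, the base set $A_1=[\tfrac{2\pi}{s+1},\tfrac{2\pi s}{s+1}]$ is already a \emph{single} interval, and the inductive hypothesis is that $\bigcup_{i=1}^{m}A_i=[\tfrac{2\pi}{(s+1)G_m},\,2\pi-\tfrac{2\pi}{(s+1)G_m}]$. The inductive step is then two one-line overlap checks: the first interval of $A_{m+1}$ meets the left end of the current interval (using $G_{m+1}\le sG_m$), and the last interval of $A_{m+1}$ meets the right end by symmetry. Since the running union is always a single interval, there are never any interior gaps to track, and the argument is a few inequalities rather than a covering lemma.
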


\begin{proof}
First note that $A_1= [ \frac{2\pi}{(s+1)G_1}, \frac{2\pi s}{(s+1)G_1}]$. Now suppose $\cup_{i=1}^m A_i$ is an interval, for some $1 \leq m < n$. Since $G_i \leq G_{i+1}$ and $G_i+1 \leq sG_{i+1}$ for all $i$, then inequalities (\ref{extendfirst}) and (\ref{extendsec}) hold:
\begin{align}
\label{extendfirst}
\tfrac{2\pi}{(s+1)G_{i+1}} &\leq \tfrac{2 \pi}{(s+1)G_{i}} \leq \tfrac{2 \pi s}{(s+1)G_{i+1}} 
\leq \tfrac{2\pi s}{(s+1)G_i} 
\end{align}
\begin{align}
\label{extendsec}
 \tfrac{2\pi}{(s+1)G_{i+1}}+\tfrac{2\pi(G_i-1)}{G_i} &\leq \tfrac{2 \pi}{(s+1)G_{i}} +\tfrac{2\pi(G_{i+1}-1)}{G_{i+1}} \leq \tfrac{2 \pi s}{(s+1)G_{i+1}}  +\tfrac{2\pi(G_i-1)}{G_i}
\leq \tfrac{2\pi s}{(s+1)G_i} +\tfrac{2\pi(G_{i+1}-1)}{G_{i+1}}.
\end{align}
It follows that the first and last intervals in the set $A_{m+1}$ extend the endpoints of the interval $\cup_{i=1}^m A_i$.
\end{proof}

\begin{lemma}\label{acontainsall}
 The angle $\frac{2\pi k }{G_n} \mod{2\pi}$ is in $ \mathcal A=\cup_{i=1}^{n-1} A_i$ for each $k=1,2, \ldots, G_n-1$.
\end{lemma}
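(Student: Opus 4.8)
The plan is to lean on Lemma \ref{aisinterval}, which tells us that $\mathcal{A} = \cup_{i=1}^{n-1} A_i$ is the single interval $\left[\frac{2\pi}{(s+1)G_{n-1}},\, \frac{2\pi((s+1)G_{n-1}-1)}{(s+1)G_{n-1}}\right]$, and then to check directly that every angle $\frac{2\pi k}{G_n}$ with $1 \le k \le G_n - 1$ lies in it. For such $k$ the number $\frac{2\pi k}{G_n}$ already belongs to $(0, 2\pi)$, so reducing modulo $2\pi$ does nothing, and the claim reduces to the two inequalities
\[
\frac{2\pi}{(s+1)G_{n-1}} \;\le\; \frac{2\pi k}{G_n} \;\le\; \frac{2\pi\bigl((s+1)G_{n-1}-1\bigr)}{(s+1)G_{n-1}}.
\]

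Next I would observe that both of these follow from the single arithmetic fact $G_n \le (s+1)G_{n-1}$. For the left-hand inequality, use $k \ge 1$ to reduce to $\frac{1}{(s+1)G_{n-1}} \le \frac{1}{G_n}$; for the right-hand inequality, use $k \le G_n - 1$ to reduce to $\frac{G_n-1}{G_n} \le \frac{(s+1)G_{n-1}-1}{(s+1)G_{n-1}}$, which rearranges to the same thing. So everything comes down to bounding $G_n$ in terms of $G_{n-1}$.

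Finally, I would prove $G_n \le (s+1)G_{n-1}$ — indeed the slightly stronger $G_n \le s\,G_{n-1}$ — straight from the recurrence. Because $\{G_n\}$ is positive and increasing, $0 \le G_{n-j} \le G_{n-1}$ for every $j \ge 1$, so dropping the terms with $\alpha_j \le 0$ and bounding the rest yields
\[
G_n = \sum_{j=1}^{d} \alpha_j G_{n-j} \;\le\; \sum_{j:\,\alpha_j > 0} \alpha_j G_{n-j} \;\le\; \Bigl(\sum_{j:\,\alpha_j > 0}\alpha_j\Bigr)G_{n-1} = s\,G_{n-1}.
\]

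I do not expect a genuine obstacle here: the substance of the lemma is just that $s$ was defined precisely so that $G_n \le (s+1)G_{n-1}$, which is exactly the slack needed for the interval of Lemma \ref{aisinterval} to swallow all of the angles $\frac{2\pi k}{G_n}$. The only points requiring a little care are that the reduction ``mod $2\pi$'' is vacuous on the range $1 \le k \le G_n - 1$, and that the bound $G_n \le s\,G_{n-1}$ also covers the small-index instances of the recurrence (it does, for the same monotonicity and positivity reason, given the stated initial data).
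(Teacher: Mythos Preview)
Your proposal is correct and follows essentially the same route as the paper: reduce to the interval description of $\mathcal{A}$ from Lemma~\ref{aisinterval}, then use $G_n \le (s+1)G_{n-1}$ to sandwich $\bigl[\tfrac{2\pi}{G_n}, \tfrac{2\pi(G_n-1)}{G_n}\bigr]$ inside $\mathcal{A}$. You actually supply more detail than the paper does, since the paper simply asserts $G_n \le (s+1)G_{n-1}$ without the derivation from the recurrence that you give.
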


\begin{proof}
It suffices to show that $[\frac{2\pi}{G_n}, \frac{2\pi(G_n-1)}{G_n}] \subset \mathcal A$. Since $G_n \leq (s+1)G_{n-1}$, then inequality (\ref{acontainsineq}) holds:
\begin{equation}
\label{acontainsineq} \tfrac{2\pi}{(s+1)G_{n-1}} \leq \tfrac{2\pi}{G_n} \leq \tfrac{2\pi(G_n-1)}{G_n} \leq \tfrac{2\pi s}{(s+1)G_{n-1}}+\tfrac{2\pi (G_{n-1}-1)}{G_{n-1}}.  \qedhere
\end{equation}
\end{proof}

\begin{lemma}
\label{eigmodup} For $n\geq 2$ and each $k=1,2, \ldots, G_n-1$, the eigenvalue modulus $\lv \lambda_k \rv$ satisfies the following: 
\[ \lv \lambda_k \rv \leq 1-\tfrac2n(1-\lv \cos(\tfrac{\pi}{s+1}) \rv) \text{ where } s=\sum_{j: \alpha_j > 0} \alpha_j.\]
\end{lemma}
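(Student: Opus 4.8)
The plan is to bound $|\lambda_k|$ using the Fourier formula $\lambda_k = \frac{1}{n}\sum_{i=1}^n \xi_{G_n}^{kG_i}$. Writing each term as a unit-modulus complex number, the triangle inequality gives $|\lambda_k| \le 1$, but to get the strict gap $\frac{2}{n}(1 - |\cos(\tfrac{\pi}{s+1})|)$ I need to exploit that at least one pair of terms in the sum cannot be too close to aligned. The key input is Lemma \ref{acontainsall}: for each $k \in \{1,\dots,G_n-1\}$ the angle $\frac{2\pi k}{G_n} \bmod 2\pi$ lies in $\mathcal{A} = \cup_{i=1}^{n-1} A_i$, hence in some particular $A_j$ with $j \le n-1$. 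By the defining property of $A_j$ recorded just after its definition, this means $\frac{2\pi k G_j}{G_n} \bmod 2\pi \in [\tfrac{2\pi}{s+1}, \tfrac{2\pi s}{s+1}]$.

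The main step is then to isolate the term $\xi_{G_n}^{kG_n}$, which equals $1$ since $G_n \mid kG_n$, together with the term $\xi_{G_n}^{kG_j}$. So I would split
\[
\lambda_k = \frac{1}{n}\Bigl( \xi_{G_n}^{kG_j} + \xi_{G_n}^{kG_n} \Bigr) + \frac{1}{n}\sum_{i \ne j, \, i \ne n} \xi_{G_n}^{kG_i},
\]
and apply the triangle inequality to the second sum, bounding it by $\frac{n-2}{n}$. For the first part, $|\xi_{G_n}^{kG_j} + \xi_{G_n}^{kG_n}| = |1 + \exp(\theta i)|$ where $\theta = \frac{2\pi k G_j}{G_n} \bmod 2\pi \in [\tfrac{2\pi}{s+1}, \tfrac{2\pi s}{s+1}]$. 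Now I invoke Lemma \ref{angleslemma} with $a = s$: this gives $|1 + \exp(\theta i)| \le |1 + \exp(\tfrac{2\pi i}{s+1})| = \sqrt{2 + 2\cos(\tfrac{2\pi}{s+1})}$. A half-angle computation turns this into $2|\cos(\tfrac{\pi}{s+1})|$ (the absolute value handling the case where $\tfrac{\pi}{s+1}$ exceeds $\tfrac{\pi}{2}$, i.e.\ $s=1$). Combining, $|\lambda_k| \le \frac{1}{n} \cdot 2|\cos(\tfrac{\pi}{s+1})| + \frac{n-2}{n} = 1 - \frac{2}{n}(1 - |\cos(\tfrac{\pi}{s+1})|)$, as claimed.

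I expect the main obstacle to be purely bookkeeping: making sure that $j$ can always be chosen with $j \le n-1$ so that the indices $j$ and $n$ appearing in the split are genuinely distinct (this needs $n \ge 2$, which is hypothesized, and it is why $\mathcal{A}$ is defined as a union only up to $n-1$), and checking the edge case $G_j = G_n$ or small $s$ does not break the argument. The trigonometric identity $\sqrt{2 + 2\cos\phi} = 2|\cos(\phi/2)|$ and the reduction of Lemma \ref{angleslemma} to the present setting with $a=s$ are both routine. One subtlety worth stating carefully: when $j$ is such that $G_j$ appears in $\mathcal{M}$ more than once the argument is unaffected since we only ever use one copy of $\xi_{G_n}^{kG_j}$ and bound the rest trivially.
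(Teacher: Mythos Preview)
Your proposal is correct and follows essentially the same argument as the paper: isolate the pair $\xi_{G_n}^{kG_j}+\xi_{G_n}^{kG_n}=1+\exp(\theta i)$ with $j\le n-1$ furnished by Lemma~\ref{acontainsall}, bound it via Lemma~\ref{angleslemma} with $a=s$, and handle the remaining $n-2$ terms trivially. The only cosmetic difference is that you make the half-angle identity $\sqrt{2+2\cos(2\pi/(s+1))}=2\lvert\cos(\pi/(s+1))\rvert$ explicit, whereas the paper absorbs it silently into the final equality.
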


\begin{proof}
We will show that for each $k$ there exists some $j \in \{ 1,2, \ldots, n-1\}$ such that
\begin{align}
\label{cosup} \lv \X_{G_n}^{kG_j} + 1 \rv \leq \sqrt{2 + 2\cos(2 \pi / s+1)}.
\end{align}
Then assuming (\ref{cosup}) holds it follows that 
\begin{align*}
\lv \lambda_k \rv &= \tfrac1n \lv \sum_{i=1}^{n} \X_{G_n}^{kG_i} \rv \\
& \leq \tfrac1n \left( \lv \X_{G_n}^{kG_j} + \X_{G_n}^{kG_n} \rv + \sum_{i\neq j, n} \lv \X_{G_n}^{kG_i} \rv \right)\\
& \leq \tfrac1n \left( n-2+ \sqrt{2 + 2\cos(\tfrac{2 \pi}{ s+1}}) \right) \\
& = 1-\tfrac2n \left(1-\lv \cos(\tfrac{\pi}{s+1}) \rv \right).
\end{align*}

Thus it only remains to show that (\ref{cosup}) holds. By Lemma \ref{angleslemma} it suffices to show that there exists some $j \in \{ 1, 2, \ldots, n-1\}$ such that $\frac{2\pi k G_j}{G_n} \mod{2\pi}$ is in the interval $ [\frac{2\pi}{s+1}, \frac{2\pi s}{s+1}]$. By Lemma \ref{acontainsall}, the angle $\frac{2\pi k}{G_n} \mod{2\pi}$ is in $ \mathcal A$ therefore we can let $j$ be the integer such that $1 \leq j < n$ and $\frac{2 \pi k}{G_n}$ is in $ \mathcal A_j$. Then we have $\frac{2\pi k G_j}{G_n} \mod{2\pi} \in  [\frac{2\pi}{s+1}, \frac{2\pi s}{s+1}]$ and hence
$\lv \X_{G_n}^{kG_j} + 1 \rv \leq \sqrt{2 + 2\cos(\tfrac{2 \pi}{s+1})}.$
\end{proof}

We now prove Theorem \ref{gencaseup}.

\begin{proof}[Proof of Theorem \ref{gencaseup}] 
By Lemma \ref{ubl}, the distance to stationarity after $t$ steps is less than $\epsilon$ when $ \sum_{k=1}^{G_n-1} |\lambda_k |^{2t} \leq 4\epsilon^2. $
If $\kappa=\frac{1}{4-4\cos(\frac{\pi}{s+1})}$  then by Lemma \ref{eigmodup},
\begin{align}
\label{gencaseububs}
\sum_{k=1}^{G_n-1} \lv \lambda_k \rv^{2t} \leq \sum_{k=1}^{G_n-1} (1-\tfrac{1}{2\kappa n})^{2t} \leq (G_n-1) \exp(-\tfrac{t}{\kappa n}).
\end{align}
Notice the right hand side of (\ref{gencaseububs}) is bounded above by $4 \epsilon^2$ when 
$ t\geq n\kappa \log(  \tfrac{G_n-1}{4\epsilon^2} ) $.
\end{proof}

To conclude this section, we prove a lower bound for $t_{mix}$ in the case of general 
linear recurrences where $\{G_n\}$ satisfies the exponential growth condition.

\begin{thm}
\label{gencaselow} 
For the random walk determined by the linear recurrence $\{ G_n \}_{n\geq 1}$ with $G_1=1,$
satisfying the exponential growth condition,  if $n>1$
\[
t_{mix}(\epsilon) \geq \frac {n-\gamma \log n}{ \gamma \log n} \log(\tfrac{1}{2 \epsilon}) 
\]
where $\gamma$ is some constant.
\end{thm}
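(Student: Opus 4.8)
The plan is to apply Lemma~\ref{relaxlb}, which says $t_{mix}(\epsilon) \geq \left(\tfrac{1}{1-\lambda_*}-1\right)\log\left(\tfrac{1}{2\epsilon}\right)$, so it suffices to produce a nontrivial eigenvalue $\lambda_k$ whose modulus is very close to $1$; specifically, I want $|\lambda_k| \geq 1 - \tfrac{\gamma \log n}{n}$ for a suitable constant $\gamma$, which then yields $\tfrac{1}{1-\lambda_*}-1 \geq \tfrac{n}{\gamma \log n} - 1 = \tfrac{n - \gamma \log n}{\gamma \log n}$. The natural candidate is $k=1$ (or more generally a $k$ making $\tfrac{2\pi k}{G_n}$ very small), so that the eigenvalue formula $\lambda_k = \tfrac1n \sum_{i=1}^n \X_{G_n}^{kG_i}$ has all its summands with arguments close to $0$.

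The key estimate is therefore a lower bound on $\mathrm{Re}(\lambda_1) = \tfrac1n \sum_{i=1}^n \cos\!\left(\tfrac{2\pi G_i}{G_n}\right)$, since $|\lambda_1| \geq \mathrm{Re}(\lambda_1)$. Using $\cos x \geq 1 - \tfrac{x^2}{2}$, this is bounded below by $1 - \tfrac{2\pi^2}{n}\sum_{i=1}^n \tfrac{G_i^2}{G_n^2}$. Now I would use the exponential growth condition: since $\kappa_2 i \leq \log G_i$ for $i$ large and $\log G_n \leq \kappa_1 n$, the ratio $G_i / G_n$ decays geometrically as $i$ decreases from $n$, so $\sum_{i=1}^n G_i^2 / G_n^2$ is bounded by an absolute constant $C$ (a convergent geometric-type series, once one handles the finitely many small $i$ where exponential growth may not yet hold). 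Actually, one needs a bit more care: the summands $G_i^2/G_n^2$ for $i$ near $n$ (say $i = n, n-1, \dots$) are $\Theta(1)$, but there are only $O(1)$ of them contributing $\Theta(1)$ each before the geometric decay kicks in, so the full sum is $O(1)$. Hence $|\lambda_1| \geq 1 - \tfrac{2\pi^2 C}{n}$ for all large $n$.

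The apparent discrepancy — I get $1 - O(1/n)$ but the theorem only needs $1 - O(\log n / n)$ — is not a problem; $1/n \leq \log n / n$, so the weaker bound suffices and in fact gives the slightly stronger conclusion $t_{mix}(\epsilon) \geq \left(\tfrac{n}{2\pi^2 C} - 1\right)\log\tfrac{1}{2\epsilon}$. But to match the stated form exactly (and to absorb the finitely many exceptional small $n$, and the fact that the exponential growth bound $\kappa_2 i \leq \log G_i$ only holds for $i$ sufficiently large, which weakens the geometric decay estimate), I would set $\gamma$ large enough that $1 - \tfrac{\gamma \log n}{n} \leq 1 - \tfrac{2\pi^2 C}{n} \leq |\lambda_*|$ holds for all $n > 1$, taking the max over the small cases. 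The main obstacle is making the bound on $\sum_{i=1}^n G_i^2/G_n^2$ fully rigorous and uniform: one must combine the upper bound $G_n \leq e^{\kappa_1 n}$ with the eventual lower bound $G_i \geq e^{\kappa_2 i}$, split the sum at the threshold index $i_0$ beyond which exponential growth holds, bound the head $\sum_{i \leq i_0}$ crudely by $i_0 \cdot (G_{i_0}/G_n)^2$ or even just by $i_0$, and bound the tail by a geometric series in $e^{-2\kappa_2}$ — then declare $C$ to be the resulting constant, which depends only on $\{G_n\}$. This is routine but needs to be written carefully since the claim is that a single $\gamma$ works for all $n>1$.
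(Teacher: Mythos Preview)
Your overall strategy---lower-bound $|\lambda_1|$ via its real part and invoke Lemma~\ref{relaxlb}---is exactly the paper's, and the choice $k=1$ is also the paper's. Where you diverge is in how the real part is bounded. The paper does \emph{not} apply $\cos x \geq 1 - x^2/2$ termwise; instead it sets $m(n)=\#\{i: G_i/G_n > 1/n\}$, replaces those $m(n)$ cosines by the trivial lower bound $-1$ and the remaining $n-m(n)-1$ cosines by $\cos(2\pi/n)$, and only then uses $\cos x \geq 1 - x^2/2$ on the single value $\cos(2\pi/n)$. The exponential growth hypothesis enters only to show $m(n) = O(\log n)$, which yields $|\lambda_1| \geq 1 - O((\log n)/n)$ and hence the stated bound.

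Your termwise approach is slicker and, if the claim $\sum_{i=1}^n G_i^2/G_n^2 = O(1)$ holds, gives the stronger conclusion $|\lambda_1| \geq 1 - O(1/n)$. But the justification you sketch has a real gap. You write ``since $\kappa_2 i \leq \log G_i$ and $\log G_n \leq \kappa_1 n$, the ratio $G_i/G_n$ decays geometrically''---yet those two inequalities bound $G_i/G_n$ from \emph{below}, not above. Switching to the correct pair ($\log G_i \leq \kappa_1 i$ and $\log G_n \geq \kappa_2 n$) gives only $G_i/G_n \leq e^{\kappa_1 i - \kappa_2 n}$, which is useless for $i$ near $n$ whenever $\kappa_1 > \kappa_2$; the promised ``geometric series in $e^{-2\kappa_2}$'' simply does not follow from the growth constants alone. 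What you actually need is that consecutive ratios $G_i/G_{i-1}$ are eventually bounded below by some $\rho_0>1$, so that $G_i/G_n \leq \rho_0^{-(n-i)}$ for all large $i$. This is true in the present setting---by Theorem~\ref{linrec} together with positivity and exponential growth, the characteristic polynomial has a dominant real root $\rho>1$ and $G_i/G_{i-1}\to\rho$---but it must be argued from the linear-recurrence structure, not read off from $\kappa_1,\kappa_2$. Once that lemma is in place your argument completes and in fact improves on the paper's bound; the paper's splitting device trades away this sharpness for a proof that stays closer to the bare growth hypothesis.
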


\begin{proof}
We will show that $\lambda_*$ satisfies the inequality $\lambda_* \geq1- \frac{\gamma \log n}{n}$ then appeal to Lemma \ref{relaxlb}.

Let $m : \mathbb N \rightarrow \mathbb N \cup \{ 0\}$ be the function
\[
m(n) = 
\begin{cases}
\max_{j \in \{1, \ldots, n-1 \}} \{ \frac{G_{n-j}}{G_n} > \frac 1n \}  & \text{ if } \frac{G_{n-1}}{G_n} > \frac 1n \\
0 & \text{ otherwise }
\end{cases}
\]
Recall that one of the eigenvalues $\lambda_1$ has the form:
\[
\lambda_1 =  \frac{1}{n} \sum_{i = 1}^n \xi_{G_n}^{G_i}.
\]
We will use the function $m(n)$ to give a lower bound on $|\lambda_1|$.  The modulus of $\lambda_1$
is bounded from below by the real part of $\lambda_1$.  This real part is
\[
\sum_{i = 1}^n \cos\left( \frac{2 \pi G_i}{G_n}\right).
\]
We can bound this sum from below to see that
 \[ 
 \lv \lambda_1 \rv \geq \frac{1 + (n-m(n)-1)\cos(\frac{2 \pi}{n})-m(n)}{n}
 \]
by replacing all summands $\cos\left( \frac{2 \pi G_i}{G_n}\right)$ where $G_i/G_n < 1/n$ by $\cos(\frac{2 \pi}{n})$
and replacing all summands where $G_i/G_n > 1/n$ by $-1$.  

Further, since $\cos(x) \geq 1-\frac{x^2}{2}$, it follows that
\begin{align*} \lv \lambda_1 \rv &\geq 1-\frac{2m(n)}{n}-\frac{2 \pi^2}{n^2} + \frac{2\pi^2(m(n)+1)}{n^3} \\
&\geq 1-\frac{2m(n)}{n}-\frac{2 \pi^2}{n^2}.
\end{align*}

Now let $\eta_1, \eta_2 >1$ be constants and $p$ be a polynomial 
such that $\eta_1^np(n) \leq G_n \leq \eta_2^n p(n)$ for all $n$. 
Then we observe that $\frac{G_{n-j}}{G_n} > \frac 1n$ holds when the inequality 
$ \frac{\eta_1^{(n-j)}p(n-j)} {\eta_2^{n}p(n)}  \geq \frac 1n $  holds.

By rearranging, this occurs when
\begin{align} j & < \frac{\log n}{\log \eta_1} + \frac{n (\log \eta_1 - \log \eta_2) }{\log \eta_1} + \log \left(\frac{p(n-j)}{p(n)} \right) \\
& \leq \frac{\log n}{\log \eta_1}
\end{align}
(since the two dropped terms are negative).
It follows that $m(n) \leq \frac{\log n}{\log \eta_1}$ and so

\begin{align*}
\lv \lambda_1 \rv & \geq 1-\frac{2 \log n}{n \log \eta_1 }-\frac{2 \pi^2}{n^2} \\
& \geq  1 - \frac{\log n}{n}(  \tfrac{2}{\log \eta_1} +  \tfrac{2 \pi^2}{n \log n})
\end{align*}
For $n \geq 2$, the term $\tfrac{2 \pi^2}{n \log n}$ is bounded above by $\tfrac{\pi^2}{\log 2}$.
\[
\lv \lambda_1 \rv   \geq 1- \frac{\log n}{n}(\tfrac{2}{\log \eta_1}+\tfrac{\pi^2}{\log 2}).
\]

This shows that $\lambda_* \geq 1- \frac{\gamma \log n}{n}$ where $\gamma = \tfrac{2}{\log \eta_1}+\tfrac{\pi^2}{\log 2}.$ Then by Lemma \ref{relaxlb},
$ t_{mix}(\epsilon) \geq \frac{n- \gamma \log n}{\gamma \log n} \log \left(\frac{1}{2 \epsilon} \right).
$
\end{proof}


\section{First Order Recurrences}
\label{spcasesection}

This section considers sequences generated by first order 
recurrences $G_n = c G_{n-1}$, that is, geometric series of the form $1, c, c^2, c^3, \ldots$, 
where $c>1$ is a positive integer.
For these sequences, we show that the order of the mixing time 
of associated family of random walks is between $n$ and $n \log n$.  
The main result of this section is the following upper bound on
mixing time:

\begin{thm} \label{spcase} 
For the random walk determined by the sequence $\{ c^{n-1}\}_{n\geq 1}$, where $c>1$ is an integer,
\[
 t_{mix}(\epsilon)  \leq \kappa n \log ( (n-1)(c-1)) -  \kappa n \log(\log (4\epsilon^2+1)), \phantom{e} \text{ where } \kappa=\tfrac{1}{1-\cos(\pi/c)}.
\]
\end{thm}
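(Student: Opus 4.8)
The plan is to mimic the structure of the general case (Theorem~\ref{gencaseup}), but exploit the extra rigidity of the geometric sequence $G_i = c^{i-1}$ to get a sharper bound on each $|\lambda_k|$. Recall $\lambda_k = \frac1n \sum_{i=1}^n \X_{G_n}^{k G_i}$, and by Lemma~\ref{ubl} it suffices to show $\sum_{k=1}^{G_n-1} |\lambda_k|^{2t} \leq 4\epsilon^2$. For the geometric sequence, the key simplification is that $k G_i = k c^{i-1}$ and $G_n = c^{n-1}$, so the angle $\frac{2\pi k c^{i-1}}{c^{n-1}} \bmod 2\pi$ is obtained from $\frac{2\pi k}{c^{n-1}}$ by multiplying by $c$ exactly $i-1$ times — i.e.\ by a shift of the base-$c$ digit expansion of $k$. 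Writing $k$ in base $c$ with digits in $\{0,1,\dots,c-1\}$, the fractional part $\{k c^{i-1}/c^{n-1}\}$ is controlled by the digits of $k$ in positions $\geq n-i$. For each $k \neq 0$ there is a highest nonzero digit, say in position $p$ (so $0 \le p \le n-2$); then for the index $i = n-1-p$ the fractional part $\{k c^{i-1}/c^{n-1}\}$ has leading digit that nonzero digit, forcing the angle into $[\frac{2\pi}{c}, 2\pi - \frac{2\pi}{c}]$ (or more precisely a regime where $\cos \leq \cos(\pi/c)$ after symmetrizing around $\pi$). This is the analogue of Lemmas~\ref{angleslemma}--\ref{acontainsall}, but now with $s$ replaced effectively by $c$, or rather with the sharper constant because we only need to fail to be within a $1/c$-fraction of $0$ or $2\pi$.

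Granting such a lemma — call it the claim that for every $k \in \{1,\dots,G_n-1\}$ there exists $j \in \{1,\dots,n-1\}$ with $\cos(2\pi k G_j / G_n) \leq \cos(\pi/c)$, hence $|\X_{G_n}^{kG_j} + 1|^2 \le 2 + 2\cos(2\pi/c)$ — the eigenvalue bound goes exactly as in Lemma~\ref{eigmodup}: pairing the $j$-th term with the $n$-th (which is $\X_{G_n}^{kG_n} = \X_{G_n}^{k c^{n-1}} = 1$) and bounding the other $n-2$ terms by $1$ in modulus gives
\[
|\lambda_k| \;\le\; \frac{1}{n}\Bigl( n-2 + \sqrt{2 + 2\cos(2\pi/c)}\Bigr) \;=\; 1 - \frac{2}{n}\bigl(1 - |\cos(\pi/c)|\bigr) \;=\; 1 - \frac{1 - \cos(\pi/c)}{n} \cdot c_0
\]
for the appropriate constant; matching the theorem's $\kappa = \frac{1}{1-\cos(\pi/c)}$ means we want $|\lambda_k| \le 1 - \frac{1}{\kappa n}$, i.e.\ $|\lambda_k| \le 1 - \frac{1-\cos(\pi/c)}{n}$. (A small check: whether the bound is $1 - \frac{2(1-|\cos(\pi/c)|)}{n}$ or $1 - \frac{1-\cos(\pi/c)}{n}$ depends on exactly how tight the angle-containment is; the $\log(\log(4\epsilon^2+1))$ rather than $\log(4\epsilon^2)$ in the statement signals that the summation step below is done slightly differently, using $\log(1+x) \le x$, so I expect the per-eigenvalue bound here is literally $|\lambda_k| \le \exp(-\frac{1-\cos(\pi/c)}{n})$-type, obtained from $|\lambda_k|^2 \le 1 - \frac{c'}{n}$ directly rather than from the crude triangle inequality.)

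The final step is the summation. With $|\lambda_k|^2 \le 1 - \frac{1}{\kappa n}$ (or the log-form of it) for all $G_n - 1$ nontrivial $k$, we get
\[
\sum_{k=1}^{G_n-1} |\lambda_k|^{2t} \;\le\; (G_n-1)\Bigl(1 - \tfrac{1}{\kappa n}\Bigr)^{t} \;\le\; (G_n-1)\exp\!\Bigl(-\tfrac{t}{\kappa n}\Bigr) \;=\; \bigl((n-1)(c-1)\bigr)\exp\!\Bigl(-\tfrac{t}{\kappa n}\Bigr) \cdot \frac{G_n-1}{(n-1)(c-1)},
\]
and since $G_n - 1 = c^{n-1} - 1 = (c-1)(1 + c + \cdots + c^{n-2}) $, one uses that $c^{n-1}-1 \le (c-1)(n-1)c^{n-2}$... actually the cleaner route, consistent with the stated bound $t_{mix}(\epsilon) \le \kappa n\log((n-1)(c-1)) - \kappa n\log(\log(4\epsilon^2+1))$, is to bound $\sum_k |\lambda_k|^{2t}$ by replacing $|\lambda_k|^{2t}$ with $\exp(-t\cdot\frac{1-\cos(\pi/c)}{n})$ and using $G_n - 1 \le (n-1)(c-1)c^{n-2}$ together with a matching factor absorbed into the exponent — or, more likely, the authors show directly that $|\lambda_k|^{2} \le \cos^2(\pi/c)^{1/n}$-style so that $\sum |\lambda_k|^{2t} \le (G_n-1)\cos(\pi/c)^{2t/n}$ and then solve $(G_n-1)\cos(\pi/c)^{2t/n} \le 4\epsilon^2$ for $t$, giving $t \ge \kappa n \log\frac{G_n-1}{4\epsilon^2}$ up to the stated reshuffling with $\log(4\epsilon^2+1)$. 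Solving for $t$ and simplifying $\log(G_n-1)$ against $\log((n-1)(c-1))$ finishes it.

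\textbf{Main obstacle.} The real content — and the only genuinely non-routine part — is the digit-shift lemma: proving that for \emph{every} $k \in \{1,\dots,c^{n-1}-1\}$ some cyclic-shift index $j$ lands the angle $2\pi k c^{j-1}/c^{n-1}$ in the bad region $[\frac{2\pi}{c}, 2\pi - \frac{2\pi}{c}]$ (equivalently, that the leading base-$c$ digit of $k c^{j-1} \bmod c^{n-1}$ is nonzero for some $j$, which is automatic from $k \neq 0$, but one must be careful that "leading digit nonzero" translates to the angle being bounded away from $0$ \emph{and} $2\pi$ — a digit equal to $c-1$ with all lower digits also $c-1$ is the edge case, landing the angle near $2\pi$, which is fine, but digit $=1$ with all lower digits zero lands exactly at $2\pi/c$, so the interval must be closed and the constant $\cos(\pi/c)$ is genuinely the extreme). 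I would state this as a lemma paralleling Lemma~\ref{acontainsall}, prove it by the base-$c$ expansion argument, and then the rest of the proof is a direct transcription of the Theorem~\ref{gencaseup} argument with $s+1$ replaced by $c$.
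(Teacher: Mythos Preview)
Your approach has a genuine gap: the uniform per-eigenvalue bound you propose, $|\lambda_k| \le 1 - \tfrac{1}{\kappa n}$ (with $\kappa = \tfrac{1}{1-\cos(\pi/c)}$), is correct but \emph{too weak} to yield the stated $O(n\log n)$ bound. Plugging it into the Upper Bound Lemma gives
\[
\sum_{k=1}^{G_n-1}|\lambda_k|^{2t} \;\le\; (c^{n-1}-1)\exp\!\Bigl(-\tfrac{t}{\kappa n}\Bigr),
\]
and solving $(c^{n-1}-1)\exp(-t/\kappa n)\le 4\epsilon^2$ forces $t \gtrsim \kappa n\cdot(n-1)\log c$, i.e.\ an $O(n^2)$ bound. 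That is exactly Theorem~\ref{gencaseup} again, not Theorem~\ref{spcase}. Your attempts to massage $c^{n-1}-1$ into $(n-1)(c-1)$ cannot work: those quantities differ by a factor exponential in $n$. You correctly sensed something was wrong from the $\log(\log(4\epsilon^2+1))$ term, but the fix is not in the summation step --- it is in the eigenvalue bound itself.

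The paper's key idea (Lemma~\ref{seq2bound}, built on Observation~\ref{polygonobs} and Lemma~\ref{addvertices}) is to \emph{stratify} the eigenvalues rather than bound them uniformly. Via a recursive lifting from the $n$-th walk to the $(n+1)$-th, it shows that the unnormalized moduli $|\tilde\lambda_{n,k}|$ can be grouped so that exactly $\binom{n-1}{m}(c-1)^m$ of them satisfy $|\tilde\lambda_{n,k}|\le n+\tfrac{m}{2}(\cos(\pi/c)-1)$, i.e.\ $|\lambda_k|\le 1-\tfrac{m}{2\kappa n}$, for each $m=0,1,\ldots,n-1$. (In your base-$c$ language, $m$ is essentially the number of indices $j$ for which the shifted angle lands in the ``bad'' region --- not just one such $j$, but a count of how many.) The Upper Bound sum then becomes
\[
\sum_{m=1}^{n-1}\binom{n-1}{m}(c-1)^m\Bigl(1-\tfrac{m}{2\kappa n}\Bigr)^{2t}
\;\le\;\sum_{m=1}^{n-1}\binom{n-1}{m}\bigl((c-1)e^{-t/\kappa n}\bigr)^m
\;=\;\bigl(1+(c-1)e^{-t/\kappa n}\bigr)^{n-1}-1,
\]
and the binomial collapse is what turns the prefactor $c^{n-1}-1$ into $(n-1)(c-1)$ after one more application of $1+x\le e^x$. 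This stratified bound, not a sharper uniform one, is the missing ingredient.
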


\noindent The easier lower bound will be proven in 
Theorem \ref{lbspcase} at the end of the section.
The key to proving Theorem \ref{spcase}  will be to exploit the following relationship 
between the eigenvalues of the $n$-th random walk and the $(n+1)$-th random walk.  
Let $\tilde{\lambda}_{n,k}$ denote the $k$-th unnormalized eigenvalue of 
the $n$-th random walk determined by $\{ c^{n-1}\}_{n\geq 1}$. That is,
\[ 
\tilde{\lambda}_{n,k} =\sum_{i=1}^{n} \X_{c^{n-1}}^{kc^{i-1}} = \sum_{i=0}^{n-1} \X_{c^{i}}^k.
\] 

\begin{obs}
\label{polygonobs}
For each $k=1,2, \ldots, G_n$, we ``lift'' the unnormalized eigenvalue 
$\tilde{\lambda}_{n,k}$  to the set 
\[
\mathcal L_{n,k} = \{ \tilde{\lambda}_{n+1,k+jc^{n-1}} : j=0,1,\ldots, c-1 \}\]
of $c$ unnormalized eigenvalues in the  $(n+1)$-th random walk. 
Each element of $\mathcal L_{n,k}$ is equal to  $\tilde{\lambda}_{n,k}$ plus some value 
of the form $\X_{c^n}^{N}$. That is, 
\begin{align*}
\label{genform}
\tilde{\lambda}_{n+1,k+jc^{n-1}} & = \sum_{i=0}^{n} \X_{c^{i}}^{k+jc^{n-1}}= \tilde{\lambda}_{n,k} + \X_{c^n}^{k+jc^{n-1}}. 
\end{align*}
\end{obs}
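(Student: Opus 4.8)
The plan is to verify the identity by a direct computation from the closed form for the unnormalized eigenvalues, and then record the geometric picture that justifies the name. Since the $(n+1)$-th random walk is determined by $G_{n+1}=c^n$, we have $\tilde\lambda_{n+1,m}=\sum_{i=0}^{n}\xi_{c^i}^{m}$ for every index $m$. First I would substitute $m=k+jc^{n-1}$ and peel off the top term $i=n$:
\[
\tilde\lambda_{n+1,k+jc^{n-1}}=\sum_{i=0}^{n-1}\xi_{c^i}^{k+jc^{n-1}}+\xi_{c^n}^{k+jc^{n-1}}.
\]

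The key point is that each exponent in the remaining sum collapses. For $0\le i\le n-1$ the quotient $jc^{n-1}/c^i=jc^{n-1-i}$ is an integer, so $\xi_{c^i}^{jc^{n-1}}=1$, and hence $\xi_{c^i}^{k+jc^{n-1}}=\xi_{c^i}^{k}$. Therefore $\sum_{i=0}^{n-1}\xi_{c^i}^{k+jc^{n-1}}=\sum_{i=0}^{n-1}\xi_{c^i}^{k}=\tilde\lambda_{n,k}$, which is exactly the claimed identity $\tilde\lambda_{n+1,k+jc^{n-1}}=\tilde\lambda_{n,k}+\xi_{c^n}^{k+jc^{n-1}}$. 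Along the way I would note that for $1\le k\le G_n=c^{n-1}$ the indices $k,k+c^{n-1},\dots,k+(c-1)c^{n-1}$ lie in $\{1,\dots,c^n\}$ and are pairwise distinct modulo $c^n$ (their differences are multiples of $c^{n-1}$ strictly between $0$ and $c^n$), so $\mathcal L_{n,k}$ genuinely selects $c$ unnormalized eigenvalues of the $(n+1)$-th walk.

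Finally, to explain the name and to set up the later estimates, I would observe that the $c$ perturbation terms have a rigid shape: since $\xi_{c^n}^{k+jc^{n-1}}=\xi_{c^n}^{k}\,\xi_{c^n}^{jc^{n-1}}=\xi_{c^n}^{k}\,\xi_{c}^{j}$ and $j\mapsto\xi_c^{j}$ runs over all $c$-th roots of unity as $j=0,1,\dots,c-1$, the set $\{\xi_{c^n}^{k+jc^{n-1}}\}_{j=0}^{c-1}$ consists of the vertices of a regular $c$-gon inscribed in the unit circle, rotated by the angle $2\pi k/c^{n}$. Hence $\mathcal L_{n,k}$ is a regular $c$-gon centered at $\tilde\lambda_{n,k}$, and this is the relationship that will drive the proof of Theorem \ref{spcase}.

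There is essentially no obstacle here; the only thing requiring care is matching the two indexing conventions for $\tilde\lambda_{n,k}$, namely checking that $\xi_{c^{n-1}}^{kc^{i-1}}=\xi_{c^{n-i}}^{k}$ so that $\sum_{i=1}^{n}\xi_{c^{n-1}}^{kc^{i-1}}=\sum_{i=0}^{n-1}\xi_{c^i}^{k}$, and then applying the top-term split and the divisibility argument to the correct powers.
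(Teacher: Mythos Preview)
Your argument is correct. The paper states this as an Observation without proof, treating the identity as immediate from the definition $\tilde\lambda_{n,k}=\sum_{i=0}^{n-1}\xi_{c^i}^{k}$; your top-term split together with the divisibility $c^i\mid jc^{n-1}$ for $i\le n-1$ is precisely the one-line verification the paper leaves to the reader, and your regular $c$-gon remark is exactly the geometric content the paper exploits in Lemmas~\ref{addvertices} and~\ref{seq2bound}.
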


Over the course of the next two lemmas, we use Observation \ref{polygonobs} 
and show that each $\lv \tilde{\lambda}_{n,k} \rv$ is bounded above 
by a value of the form $n+\frac m2(1-\cos(\frac{\pi}{c}))$, for some 
$m \in \{0,1, \ldots, n-1 \}$. Once that is established, 
to prove Theorem \ref{spcase} we will apply the Upper Bound Lemma.

\begin{lemma}
\label{addvertices}
Let $c>1$ be an integer, $z \in \mathbb C$, and define sets $\mathcal A $ and $\mathcal B$ as follows:
\begin{align*} \mathcal A &=\{ \lv z + \exp(\tfrac{2\pi ji}{ c}) \rv : j=0,1, \ldots, c-1\} \\
 \mathcal B & = \{ \lv z \rv +1 \} \cup \{ \sqrt{ \lv z \rv ^2 + 2 \lv z \rv \cos ( \tfrac{(2j-1)\pi}{ c}) +1} : j = 1,2, \ldots, \lfloor \tfrac c2 \rfloor \}
 \end{align*} There exists a function $f: \mathcal A \rightarrow \mathcal B$ such that $x \leq f(x)$ for all $x \in \mathcal A$.
\end{lemma}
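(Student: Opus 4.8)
The set $\mathcal A$ consists of the moduli $\lv z + \exp(\tfrac{2\pi j i}{c})\rv$ as $j$ ranges over $0,\dots,c-1$, i.e.\ the distances from $-z$ to the $c$-th roots of unity. Writing $z = r e^{i\phi}$ with $r = \lv z\rv$, a direct computation gives $\lv z + \exp(\tfrac{2\pi j i}{c})\rv^2 = r^2 + 1 + 2r\cos(\phi - \tfrac{2\pi j}{c})$. So each element of $\mathcal A$ has the form $\sqrt{r^2 + 1 + 2r\cos\psi}$ for some angle $\psi = \phi - \tfrac{2\pi j}{c}$, and this is an increasing function of $\cos\psi$ (since $r \geq 0$). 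The plan is therefore to show that for every $j$, the angle $\psi_j = \phi - \tfrac{2\pi j}{c}$ satisfies $\cos\psi_j \leq \cos\theta$ for one of the ``benchmark'' angles $\theta \in \{0\} \cup \{\tfrac{(2j-1)\pi}{c} : j = 1,\dots,\lfloor c/2\rfloor\}$ appearing in $\mathcal B$; then mapping that element of $\mathcal A$ to the corresponding element of $\mathcal B$ gives the desired $f$ with $x \leq f(x)$.

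The combinatorial heart of the argument is the following observation about angles. As $j$ runs over $0,\dots,c-1$, the angles $\tfrac{2\pi j}{c}$ are equally spaced around the circle with gap $\tfrac{2\pi}{c}$, so the shifted angles $\psi_j = \phi - \tfrac{2\pi j}{c}$ are also equally spaced with the same gap. Reducing modulo $2\pi$ and folding by $\cos\psi = \cos(-\psi) = \cos(2\pi - \psi)$, I want to argue that the multiset $\{\cos\psi_j : j = 0,\dots,c-1\}$ is majorized, entry by entry after sorting, by the multiset of cosines of the benchmark angles — more precisely, that each value $\cos\psi_j$ is at most $\cos$ of the benchmark angle ``closest to $0$'' that is still available, after matching up the largest values first. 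The benchmark angles $0, \tfrac{\pi}{c}, \tfrac{3\pi}{c}, \dots, \tfrac{(2\lfloor c/2\rfloor - 1)\pi}{c}$ are themselves spaced $\tfrac{2\pi}{c}$ apart (after the initial gap of $\tfrac{\pi}{c}$ from $0$), and they represent the ``worst case'' positions for the grid $\{\psi_j\}$: among all rotations of a $\tfrac{2\pi}{c}$-spaced grid, the configuration with one point exactly at $0$ makes all the folded cosine values as large as possible simultaneously. So the key step is a pigeonhole/matching argument: sort the $c$ angles $\psi_j \bmod 2\pi$ into $[0,2\pi)$, fold them into $[0,\pi]$, sort increasingly, and compare term-by-term against the sorted list of benchmark angles, showing the $i$-th folded angle is at least the $i$-th benchmark angle (hence its cosine is at most the benchmark cosine). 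I expect this term-by-term inequality to be the main obstacle, and I would prove it by a careful case split on the parity of $c$ and on where $\phi \bmod \tfrac{2\pi}{c}$ falls, using that shifting a uniform grid can only move folded points toward $\pi$ relative to the $0$-anchored grid.

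Once this matching is established, the definition of $f$ is immediate: send $x = \sqrt{r^2 + 1 + 2r\cos\psi_j} \in \mathcal A$ to $\sqrt{r^2 + 1 + 2r\cos\theta} \in \mathcal B$ where $\theta$ is the benchmark angle matched to $\psi_j$ (with $\theta = 0$ corresponding to the element $\lv z\rv + 1 = \sqrt{r^2 + 2r + 1}$). Monotonicity of $t \mapsto \sqrt{r^2 + 1 + 2rt}$ in $t$, together with $\cos\psi_j \leq \cos\theta$, yields $x \leq f(x)$. One should also double-check the bookkeeping when $c$ is even: then $c = 2\lfloor c/2\rfloor$, there is exactly one benchmark angle of each type and they account for all $c$ grid points after folding (the point at $\pi$ when present pairs with $\cos\theta$ for $\theta$ near $\pi$), while when $c$ is odd, $c = 2\lfloor c/2\rfloor + 1$ and the single unpaired grid point is the one nearest $0$, matched to $\theta = 0$. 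These parity checks are routine once the majorization statement is in hand, so I would state them briefly and focus the write-up on the grid-folding inequality.
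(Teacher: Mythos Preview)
Your overall strategy---write $|z+\exp(2\pi ji/c)|^2 = r^2+1+2r\cos\psi_j$ with $r=|z|$, reduce to comparing the angles $\psi_j$ (folded into $[0,\pi]$) against the benchmark angles, and then carry out a case split on the parity of $c$ and on $\alpha:=\phi\bmod\tfrac{2\pi}{c}$---is exactly the paper's approach. The paper lets $\alpha\in[0,\tfrac{\pi}{c}]$ be the angle between $z$ and its nearest $c$-th root of unity, then defines $f$ explicitly by cases and checks each inequality directly, which is what your planned case split would amount to.

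However, one specific claim in your plan is false and would derail you if you relied on it: it is \emph{not} true that ``among all rotations of a $\tfrac{2\pi}{c}$-spaced grid, the configuration with one point exactly at $0$ makes all the folded cosine values as large as possible simultaneously.'' For example, with $c=4$ and shift $\alpha\in(0,\tfrac{\pi}{4}]$, the sorted folded angles are $\alpha,\ \tfrac{\pi}{2}-\alpha,\ \tfrac{\pi}{2}+\alpha,\ \pi-\alpha$; the second of these is \emph{smaller} than the second folded angle $\tfrac{\pi}{2}$ of the $0$-anchored grid, so the corresponding cosine is larger, not smaller. Relatedly, your bookkeeping is off: there are only $1+\lfloor c/2\rfloor$ benchmarks in $\mathcal B$ but $c$ elements of $\mathcal A$, so a term-by-term comparison of sorted lists of equal length is not available, and the benchmarks cannot ``account for all $c$ grid points'' one-to-one. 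The paper's $f$ is accordingly not injective: the element nearest $0$ maps to $|z|+1$, and for $c$ even the two folded angles $\tfrac{2j\pi}{c}\pm\alpha$ both map to the single benchmark $\tfrac{(2j-1)\pi}{c}$ (with the antipodal one going to $\tfrac{(c-1)\pi}{c}$). Each of those inequalities reduces to $\alpha\le\tfrac{\pi}{c}$. Once you drop the $0$-anchored heuristic and do the explicit case split you already propose, with this two-to-one matching in mind, your argument will coincide with the paper's.
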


\begin{proof}
Let $\alpha$ be the angle between $z$ and the vector nearest to $z$ from the set $\{ \exp(\frac{2\pi ji}{ c}) : j=0,1, \ldots, c-1\}$ in the complex plane. So $\alpha$ satisfies the inequality $0 \leq \alpha \leq \frac {\pi}{c}$. We illustrate an example in Figure \ref{generaladdvertbound}.

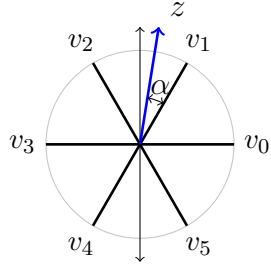
\begin{figure}[H]
\begin{center}
\begin{tikzpicture}[scale=1.25]
\draw [color=gray!45, solid] (0,0) circle (1);

\foreach \ang in {0,1,2,3,4,5} {
  \draw [ line width=1pt] (0,0) -- +(\ang * 360 / 6:1);}
  
  \foreach \ang in {1,3} {
  \draw [->] (0,0) -- +(\ang * 360 / 4:1.25);}

\foreach \ang in {0,1,2,3,4,5} {
  \node at (\ang * 360 / 6:1.25) {\small $v_{\ang}$};}

\draw[blue,line width=1pt,->] (0,0) -- (0.2,1.25);
\node at (0.2,1.25) [above right] {$z$};
\coordinate (A) at (0,0);
\coordinate (B) at (0.5,0.866);
\coordinate (C) at (0.1579,0.9874);

\tikzAngleOfLine(A)(B){\AngleStart}
    \tikzAngleOfLine(A)(C){\AngleEnd}
    \draw[black,<->] (A)+(\AngleStart:0.5cm) arc (\AngleStart:\AngleEnd:0.5 cm);
    \node[circle] at ($(A)+({(\AngleStart+\AngleEnd)/2}:0.65 cm)$) {$\alpha$};

\end{tikzpicture} 
\caption{ \label{generaladdvertbound} Suppose $c=6$, $v_j=\exp(\pi j i/3)$, and $z \in \mathbb C$ as shown. Then $\alpha$ is the angle between $z$ and $v_1$. Lemma \ref{addvertices} gives an upper bound on $\lv z+ v_j \rv$ for each $j$. }
\end{center}
\end{figure}

When $c$ is even, $$\mathcal A=\{\ \sqrt{ \lv z \rv ^2 \pm 2 \lv z \rv \cos (\alpha) +1}  \} \cup \{ \sqrt{ \lv z \rv ^2 \pm 2 \lv z \rv \cos (\tfrac{2j \pi}{c} \pm \alpha) +1} : j = 1,2, \ldots, \tfrac c2 -1 \}.$$ We define the function $f$ as follows:

{\small
\[ f(x)=  \begin{cases} 
      \lv z \rv + 1  & \text{ if } x= \sqrt{ \lv z \rv ^2 + 2 \lv z \rv \cos (\alpha) +1}, \\
      \sqrt{ \lv z \rv ^2 + 2 \lv z \rv \cos (\pi-\tfrac{\pi}{c}) +1} & \text{ if }  x= \sqrt{ \lv z \rv ^2 - 2 \lv z \rv \cos (\alpha) +1},\\
      \sqrt{ \lv z \rv ^2 + 2 \lv z \rv \cos (\tfrac{(2j-1)\pi}{c}) +1} & \text{ if }  x=  \sqrt{ \lv z \rv ^2 \pm 2 \lv z \rv \cos (\tfrac{2j \pi}{c} \pm \alpha) +1}, \text{ for }1 \leq j \leq \tfrac c2 -1.
   \end{cases}
\]
}

It is clear that $f(\mathcal A) \subset \mathcal B$. Now to check that $x \leq f(x)$ for each $x \in \mathcal A$ we consider the three cases. First, since $0 \leq \alpha \leq \pi$, then $$\sqrt{ \lv z \rv ^2 + 2 \lv z \rv \cos (\alpha) +1} \leq \lv z \rv +1.$$ Second, since $\pi -\alpha \geq \pi - \frac{\pi}{c}$, then $-\cos(\alpha)=\cos(\pi -\alpha) \leq \cos(\pi - \frac{\pi}{c})$. Hence, $$\sqrt{ \lv z \rv ^2 - 2 \lv z \rv \cos (\alpha) +1} \leq \sqrt{ \lv z \rv ^2 + 2 \lv z \rv \cos (\pi-\tfrac{\pi}{c}) +1}.$$ Third, for each $j=1,2, \ldots, \frac c2 -1$, the inequality $\frac{2j \pi}{c} \pm \alpha \geq \frac{(2j-1)\pi}{c}$ holds. Hence,
$$ \sqrt{ \lv z \rv ^2 \pm 2 \lv z \rv \cos (\tfrac{2j \pi}{c} \pm \alpha) +1} \leq   \sqrt{ \lv z \rv ^2 + 2 \lv z \rv \cos (\tfrac{(2j-1)\pi}{c}) +1}. $$

When $c$ is odd, 
$$\mathcal A=\{\ \sqrt{ \lv z \rv ^2 + 2 \lv z \rv \cos (\alpha) +1}  \} \cup \{ \sqrt{ \lv z \rv ^2 \pm 2 \lv z \rv \cos (\tfrac{2j \pi}{c} \pm \alpha) +1} : j = 1,2, \ldots, \tfrac{c-1}{2} \}.$$ In this case we define the function $f$ as

{\small
\[ f(x)=  \begin{cases} 
      \lv z \rv + 1  & \text{ if } x= \sqrt{ \lv z \rv ^2 + 2 \lv z \rv \cos (\alpha) +1}, \\
      \sqrt{ \lv z \rv ^2 + 2 \lv z \rv \cos (\tfrac{(2j-1)\pi}{c}) +1} & \text{ if }  x=  \sqrt{ \lv z \rv ^2 \pm 2 \lv z \rv \cos (\tfrac{2j \pi}{c} \pm \alpha) +1}, \text{ for }1 \leq j \leq \tfrac{c-1}{2}.
   \end{cases}
\]
}
By the same arguments used in the even case, $x \leq f(x)$ for all $x \in \mathcal A$.
\end{proof}

Notice that Lemma \ref{addvertices} still holds when we instead define $\mathcal A = \{ \lv z + \exp(\tfrac{2\pi (j+l)i}{ c}) \rv : j=0,1, \ldots, c-1\}$, for some fixed integer $l >0$, since this change corresponds to rotating each $v \in \{  \exp(\tfrac{2\pi ji}{ c}) : j=0,1, \ldots, c-1 \}$ about the origin through the same fixed angle.

\begin{lemma}
\label{seq2bound} 
For $n>1$, define the sets $\mathcal U_n$ and $\mathcal V_n$ as follows:
\begin{align*}
\mathcal U_n &= \{ \lv \tilde{\lambda}_{n,k} \rv : k= 1,2, \ldots, c^{n-1} \} \\
\mathcal V_n &= \{ n+\tfrac m2( \cos(\tfrac{\pi}{c}) - 1) :  m= 0, 1, \ldots, n-1 \}
\end{align*}
 There exists a function $h_n: \mathcal U_n \rightarrow \mathcal V_n$ such that,
\begin{enumerate}
\item $u \leq h_n(u)$ for all $u \in \mathcal U_n$, and 
\item $\#h_n^{-1}(n+\frac m2( \cos(\frac{\pi}{c}) - 1))= \binom{n-1}{m}(c-1)^m$, for $m= 0, 1, \ldots, n-1$.
\end{enumerate}
\end{lemma}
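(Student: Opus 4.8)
The plan is to prove the lemma by induction on $n$, building $h_{n+1}$ out of $h_n$ by following the lift of each unnormalized eigenvalue from level $n$ to level $n+1$ described in Observation \ref{polygonobs}. Throughout set $\beta := \cos(\tfrac{\pi}{c}) \in [0,1)$ and, for $0 \le m \le n-1$, write $V^{(n)}_m := n + \tfrac m2(\beta-1)$, so that $\mathcal V_n = \{V^{(n)}_0, \ldots, V^{(n)}_{n-1}\}$. The values $V^{(n)}_m$ are \emph{decreasing} in $m$, so the smallest element of $\mathcal V_n$ is $V^{(n)}_{n-1} = \tfrac{n+1+(n-1)\beta}{2}$, and this is at least $\tfrac{3+\beta}{4}$ for every $n \ge 1$. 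The base case $n=1$ is immediate: $\mathcal U_1 = \{1\}$ since $\xi_{c^0}=1$, $\mathcal V_1 = \{1\}$, and $h_1(1)=1$ works with $\#h_1^{-1}(1) = 1 = \binom00(c-1)^0$; the inductive step below then yields the lemma for all $n > 1$.

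For the inductive step I would fix $n \ge 1$, assume $h_n$ has the two stated properties, and argue as follows. By Observation \ref{polygonobs}, every index $k' \in \{1,\ldots,c^n\}$ factors uniquely as $k' = k + j c^{n-1}$ with $k \in \{1,\ldots,c^{n-1}\}$ and $j \in \{0,\ldots,c-1\}$, and $\tilde\lambda_{n+1,k'} = \tilde\lambda_{n,k} + \xi_{c^n}^k \xi_c^j$. Hence, for fixed $k$, the $c$ level-$(n+1)$ eigenvalues lying over $z := \tilde\lambda_{n,k}$ are exactly the numbers $z + v$ as $v$ ranges over $\xi_{c^n}^k \cdot \{\exp(2\pi i j/c) : 0 \le j \le c-1\}$, that is, over the vertices of a regular $c$-gon obtained by rotating the $c$-th roots of unity through the fixed angle $2\pi k / c^n$. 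Lemma \ref{addvertices} applies to this configuration --- the remark after it records that the conclusion is insensitive to such a rotation, and indeed its proof goes through verbatim for an arbitrary rotation --- and produces one distinguished (``nearest'') lift of modulus at most $|z|+1$, while each of the other $c-1$ lifts has modulus at most $\sqrt{|z|^2 + 2|z|\cos(\tfrac{(2j-1)\pi}{c}) + 1}$ for some $j \ge 1$, hence at most $\sqrt{|z|^2 + 2|z|\beta + 1}$, using $|z| \ge 0$ and that $\cos$ is decreasing on $[0,\pi]$.

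I would then define $h_{n+1}$ on these $c$ lifts by sending the distinguished one to $h_n(|z|) + 1$ and each of the remaining $c-1$ to $h_n(|z|) + \tfrac{1+\beta}{2}$. If $h_n(|z|) = V^{(n)}_m$ then these targets are $V^{(n+1)}_m$ and $V^{(n+1)}_{m+1}$, so $h_{n+1}$ indeed lands in $\mathcal V_{n+1}$. Property (1) then holds: for the distinguished lift, $|z|+1 \le h_n(|z|)+1$; for each of the others, since $0 \le |z| \le h_n(|z|)$ and the map $x \mapsto \sqrt{x^2 + 2\beta x + 1}$ is nondecreasing on $[0,\infty)$, the modulus is at most $\sqrt{B^2 + 2\beta B + 1}$ with $B := h_n(|z|)$, and squaring both sides shows $\sqrt{B^2 + 2\beta B + 1} \le B + \tfrac{1+\beta}{2}$ whenever $B \ge \tfrac{3+\beta}{4}$ --- which holds since $B \ge \min \mathcal V_n \ge \tfrac{3+\beta}{4}$. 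Property (2) is a count: a level-$(n+1)$ element sent to $V^{(n+1)}_{m'}$ is either the distinguished lift of a level-$n$ element sent to $V^{(n)}_{m'}$ (one such lift per element) or a non-distinguished lift of one sent to $V^{(n)}_{m'-1}$ ($c-1$ such lifts per element), so by the inductive hypothesis the number of preimages is $\binom{n-1}{m'}(c-1)^{m'} + (c-1)\binom{n-1}{m'-1}(c-1)^{m'-1} = \binom{n}{m'}(c-1)^{m'}$ by Pascal's rule, with the usual conventions when $m'$ is $0$ or $n$.

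The main obstacle is the non-distinguished case of property (1): one has to notice that $\mathcal V_n$ never dips below $\tfrac{3+\beta}{4}$, which is precisely what makes the elementary inequality $\sqrt{B^2+2\beta B+1} \le B+\tfrac{1+\beta}{2}$ available; everything else is bookkeeping. One should also settle, at the outset, the convention that $\mathcal U_n$ and the fibers of $h_n$ are counted with multiplicity --- indexed by $k \in \{1,\ldots,c^{n-1}\}$ --- so that the cardinalities asserted in property (2), whose total is $\sum_{m=0}^{n-1} \binom{n-1}{m}(c-1)^m = c^{n-1}$, are literally correct.
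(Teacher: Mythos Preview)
Your proof is correct and follows essentially the same inductive route as the paper's: lift each $\tilde\lambda_{n,k}$ via Observation~\ref{polygonobs}, bound the $c$ lifts via Lemma~\ref{addvertices}, send the distinguished lift to $h_n(|z|)+1$ and the remaining $c-1$ to $h_n(|z|)+\tfrac{1+\beta}{2}$, and verify the fiber counts by Pascal's rule. Your isolation of the threshold $B \ge \tfrac{3+\beta}{4}$ for the inequality $\sqrt{B^2+2\beta B+1}\le B+\tfrac{1+\beta}{2}$ is a slightly cleaner packaging of the same algebra the paper carries out, and starting the induction at $n=1$ rather than $n=2$ is harmless.
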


\begin{proof}
Here we use induction. Let $n=2$.  By observation \ref{polygonobs}, the set $\mathcal U_2$ is $\{ \lv \tilde{\lambda}_{1,1} + \X_c^{1+j} \rv : j =0,1, \ldots,c-1 \}$. Note that $\tilde{\lambda}_{1,1}=1$ and 
\[
\{ \X_c^{1+j} : j= 0,1, \ldots, c-1\}= \{ \exp(\frac{2\pi j i}{c}) : j=0,1, \ldots, c-1 \}.
\]
So we can let 
\[
f: \mathcal U_2 \rightarrow \{ 2\} \cup \{ \sqrt{2 + 2\cos(\tfrac{(2j-1)\pi}{c})} : 
j =1,2, \ldots, \lfloor \tfrac c2 \rfloor \}
\]
be as described in proof of Lemma \ref{addvertices} where $u \leq f(u)$ for all $u \in \mathcal U_2$ and define $h_2$ as follows:

\[ 
h_2(u)=
\begin{cases}
2 & \text{if } u \in f^{-1}(2) \\
2+\frac 12(\cos(\tfrac{\pi}{c})-1) & \text{otherwise.}
\end{cases}
\]
 
Since $ \#f^{-1}(2)=1$, then $\#h_2^{-1}(2)=1$ and 
$\# h_2^{-1}(2+\frac 12(\cos(\frac{\pi}{c})-1))=c-1$, so $h_2$ satisfies condition (2). 
For $u \in h_2^{-1}(2)$, the inequality $u \leq h_2(u)$ holds by the triangle inequality. 
If $u \in h_2^{-1}(2+\frac 12(\cos(\frac{\pi}{c})-1))$, then 
$u= \lv \tilde{\lambda}_{1,1} + \X_c^{1+j} \rv $ for some $j$ such that the 
angle between $\tilde{\lambda}_{1,1}$ and $\X_c^{1+j}$, 
when plotted in the complex plane, is greater than or equal to 
$\frac{\pi}{c}$. As a consequence of Lemma \ref{addvertices}, $u \leq \sqrt{2 + 2\cos(\frac{\pi}{c})}$. Now 
\[ 
2+2\cos(\tfrac{\pi}{c})  \leq (2+\tfrac 12(\cos(\tfrac{\pi}{c})-1))^2
\]
since $\frac 14  (\cos(\frac{\pi}{c}) - 1 )^2 \geq 0$ and hence $h_2$ also satisfies condition (1).

Now suppose the Lemma \ref{seq2bound} holds for some $n>1$. We will define a function 
$$h_{n+1}: \{ \lv \tilde{\lambda}_{n+1,k} \rv : k= 1,2, \ldots, c^n \} \rightarrow \{ n+1 + \tfrac m2 (\cos(\tfrac{\pi}{c})-1) : m=0,1,\ldots, n \}$$ that satisfies conditions (1) and (2) assuming there exists a function 
 $$h_n : \{ \lv \tilde{\lambda}_{n,k} \rv : k= 1,2, \ldots, c^{n-1} \} \rightarrow \{ n+\tfrac m2( \cos(\tfrac{\pi}{c}) - 1) :  m= 0, 1, \ldots, n-1 \}$$ that satisfies those conditions. 
 
 For each $k=1,2, \ldots, c^{n-1}$, let 
\[
U_{n+1,k}=\{   \lv \tilde{\lambda}_{n+1,k+jc^{n-1}} \rv : j= 0,1,\ldots,c-1  \}.
\] 
Then by Observation \ref{polygonobs}, 
\[
U_{n+1,k}=\{ \lv \tilde{\lambda}_{n,k} + \X_{c^n}^{k+jc^{n-1} } \rv: j= 0,1,\ldots,c-1  \}
\]
and $\mathcal U_{n+1}=\cup_{k=1}^{c^{n-1}} U_{n+1,k}$. For each $k$, the set 
\[
\{  \X_{c^n}^{k+jc^{n-1} }: j= 0,1,\ldots,c-1\} = \{ \exp(\tfrac{2 \pi k i}{c^n}) \exp(\tfrac{2 \pi j i }{c}) : j= 0,1,\ldots,c-1\}
\]
is a rotation of the set $\{  \exp(\frac{2 \pi j i}{ c}) : j= 0,1,\ldots,c-1\}$ 
about the origin in the complex plane. So we can let 
$ \lv \tilde{\lambda}_{n,k} + \X_{c^n}^{k+j'c^{n-1} } \rv$ 
be an element of $U_{n+1,k}$ such that the vector nearest 
to $\tilde{\lambda}_{n,k}$ from the set $\{  \X_{c^n}^{k+jc^{n-1} }: j= 0,1,\ldots,c-1\}$ 
is $ \X_{c^n}^{k+j'c^{n-1} } $. 
Now set
\[
h_{n+1}(\lv \tilde{\lambda}_{n,k} + \X_{c^n}^{k+j'c^{n-1} } \rv)=h_n(\lv \tilde{\lambda}_{n,k} \rv) +1
\]
and for the remaining $\lv \tilde{\lambda}_{n,k} + \X_{c^n}^{k+jc^{n-1} } \rv \in U_{n+1,k}$, set 
\[
h_{n+1}(\lv \tilde{\lambda}_{n,k} + \X_{c^n}^{k+jc^{n-1} } \rv)=
h_n(\lv \tilde{\lambda}_{n,k} \rv) + \tfrac 12 (\cos(\tfrac{\pi}{c}) +1 ).\]
By repeating for each $k$, we define $h_{n+1}$ on all of $\mathcal U_{n+1}.$ 

It remains to show that $h_{n+1}$ satisfies conditions (1) and (2). We first show that $u \leq h_{n+1}(u) $ for all $u \in U_{n+1}$: 

For $u \in \mathcal U_{n+1}$, $u= \lv \tilde{\lambda}_{n,k} + \X_{c^n}^{k+jc^{n-1} } \rv$ for some $k\in \{1,2, \ldots, c^{n-1} \}$ and some $j \in \{ 0,1,\ldots, c-1\}$. If $h_{n+1}(u)=h_n(\lv \tilde{\lambda}_{n,k} \lv) +1$, then $u \leq h_{n+1}(u)$ by the triangle inequality. On the other hand suppose $h_{n+1}(u)=h_n(\lv \tilde{\lambda}_{n,k} \lv) +\frac 12 (\cos(\frac{\pi}{c})+1)$ and say $h_n(\lv \tilde{\lambda}_{n,k} \lv)=n+\frac {m'}{2}(\cos(\frac{\pi}{c})-1)$ for some $0 \leq m' \leq n-1$. Then  $\lv \tilde{\lambda}_{n,k} \lv \leq n+\frac {m'}{2}(\cos(\frac{\pi}{c}) -1) $ and $ h_{n+1}(u)=n+1+\frac {m'+1}{2}(\cos(\frac{\pi}{c})-1)$. As a corollary to Lemma \ref{addvertices}, 
\begin{align*}
 u & \leq \sqrt{ \lv \tilde{\lambda}_{n,k} \rv^2  + 2 \lv \tilde{\lambda}_{n,k} \rv  \cos(\tfrac{\pi}{c}) +1} \\
 & \leq \sqrt{ (n+\tfrac {m'}{2}(\cos(\tfrac{\pi}{c}) -1))^2  + 2( n+\tfrac {m'}{2}(\cos(\tfrac{\pi}{c}) -1))  \cos(\tfrac{\pi}{c}) +1} \\
 & \leq n+1+\tfrac {m'+1}{2}(\cos(\tfrac{\pi}{c})-1)
\end{align*}
The last step follows since 
\begin{align*}
m'\cos(\tfrac{\pi}{c}) (\cos(\tfrac{\pi}{c})-1) \leq m'\cos(\tfrac{\pi}{c}) + (n+1)(\cos(\tfrac{\pi}{c})-1) + \tfrac{2m'+1}{4}(\cos(\tfrac{\pi}{c})-1).
\end{align*}

Finally we show that $\#h_{n+1}^{-1}(n+1+\frac m2( \cos(\frac{\pi}{c}) - 1))= \binom{n}{m}(c-1)^m$, for $m= 0, 1, \ldots, n$. By inductive hypothesis $ h_n^{-1}(n+\frac m2( \cos(\frac{\pi}{c}) - 1))= \binom{n-1}{m}(c-1)^m$, for $m= 0, 1, \ldots, n-1$. 

We note that $\#h_{n+1}^{-1}(n+1)=\#h_n^{-1}(n)=1$ and for $m'$ satisfying $1\leq m' \leq n$, 
\begin{align*} \#h_{n+1}^{-1}(n+1+\tfrac m2( \cos(\tfrac{\pi}{c}) - 1)) 
&= \#h_n^{-1}(n+\tfrac m2( \cos(\tfrac{\pi}{c}) - 1)) + \#h_n^{-1}(n+\tfrac {m-1}{2}( \cos(\tfrac{\pi}{c}) - 1))\cdot(c-1)\\
&= \binom{n-1}{m}(c-1)^m+\binom{n-1}{m-1}(c-1)^m \\
& = (c-1)^m \binom{n}{m}
\end{align*}
which concludes the proof.
\end{proof}

\begin{proof}[Proof of Theorem \ref{spcase}]
Recall that $\lambda_k = \frac1n \sum_{i=1}^{n} \X_{c^{n-1}}^{kc^{i-1}}$ is the $k$-th eigenvalue of the $n$-th random walk. So $\lv \lambda_k \rv = \frac1n \lv \tilde{\lambda}_{n,k} \rv$. By Lemma \ref{ubl}, to find $t$ such that $\| P_0^t-\pi \|_{TV} \leq \epsilon$, it suffices to find $t$ such that  $ \sum_{k=1}^{c^{n-1}-1} \lv \lambda_k \rv ^{2t} \leq 4\epsilon^2.$ 

If $\kappa=\frac{1}{1-\cos(\pi/c)}$ then by Lemma \ref{seq2bound} we have 
\begin{align}
\label{bineigup}
\sum_{k=1}^{c^{n-1}-1} \lv \lambda_k \rv ^{2t} = \sum_{k=1}^{c^{n-1}-1} \left(\frac1n \lv \tilde{\lambda}_{n,k} \rv \right)^{2t} \leq  \sum_{m=1}^{n-1} {{n-1}\choose{m}}(c-1)^m \left( 1-\tfrac{m}{2\kappa n} \right)^{2t}
\end{align}

The right hand side of  (\ref{bineigup}) can also be bounded above
\begin{align*} 
\leq \sum_{m=1}^{n-1} {{n-1}\choose{m}} (c-1)^m \exp \left(-\tfrac {t}{\kappa n}\right)^m 
\end{align*}
and by the Binomial Theorem,
\begin{align}
\label{expup}
= \left(1+(c-1)\exp(-\tfrac {t}{\kappa n})\right)^{n-1}-1  \leq \exp \left((c-1)(n-1)\exp(-\tfrac {t}{\kappa n}) \right)-1.
\end{align}
Finally, the right hand side of  (\ref{expup}) $\leq 4\epsilon^2 $ when
\[ t \geq \kappa n \log((n-1)(c-1)) - \kappa n \log( \log (4\epsilon^2+1)). \qedhere \]  
\end{proof}

We conclude this section with a lower bound on mixing time.

\begin{thm}
\label{lbspcase}
For the random walk determined by the sequence $\{ c^{n-1}\}_{n\geq 1}$, where $c>1$ is an integer,
\[
 t_{mix}(\epsilon)  \geq (\gamma n -1) \log(\tfrac{1}{2\epsilon}), \phantom{ee} \text{where } \gamma= \tfrac{1}{1-\cos(2 \pi /c)}.
\]
\end{thm}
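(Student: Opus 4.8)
The plan is to invoke Lemma~\ref{relaxlb}, which reduces the task to showing that $\lambda_* \geq 1 - \tfrac{1-\cos(2\pi/c)}{n}$ for every $n \geq 2$ (the case $n=1$ is trivial, since $\mathbb{Z}_{G_1}$ is a single point). For this it suffices to exhibit one nontrivial eigenvalue of the $n$-th transition matrix of modulus at least this large. I would take the index $k = c^{n-2}$, which lies in $\{1, 2, \ldots, c^{n-1}-1\}$ because $c \geq 2$, so $\lambda_{c^{n-2}}$ is indeed a nontrivial eigenvalue.

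Next I would compute $\lambda_{c^{n-2}} = \tfrac1n\sum_{i=1}^n \X_{c^{n-1}}^{c^{n-2}c^{i-1}}$ term by term. For $i \geq 2$ we have $c^{n-2}c^{i-1} = c^{n-1}\cdot c^{i-2}$ with $c^{i-2}$ a nonnegative integer, so $\X_{c^{n-1}}^{c^{n-2}c^{i-1}} = 1$; for $i = 1$ the term is $\X_{c^{n-1}}^{c^{n-2}} = \X_c$. Hence $\lambda_{c^{n-2}} = \tfrac1n\big((n-1) + \X_c\big)$, a number with positive real part, so
\[
|\lambda_{c^{n-2}}| \;\geq\; \mathrm{Re}\,\lambda_{c^{n-2}} \;=\; \frac{(n-1) + \cos(2\pi/c)}{n} \;=\; 1 - \frac{1-\cos(2\pi/c)}{n}.
\]
Therefore $0 < 1 - \lambda_* \leq \tfrac{1-\cos(2\pi/c)}{n}$, whence $\tfrac{1}{1-\lambda_*} \geq \tfrac{n}{1-\cos(2\pi/c)} = \gamma n$, and Lemma~\ref{relaxlb} yields $t_{mix}(\epsilon) \geq (\gamma n - 1)\log(\tfrac{1}{2\epsilon})$.

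There is no real obstacle here: the entire argument rests on the observation that choosing $k = c^{n-2}$ forces all but one of the $n$ roots of unity appearing in the eigenvalue formula to collapse to $1$, with the remaining one a fixed primitive $c$-th root. The only points requiring a word of care are that $c^{n-2}$ indexes a genuinely nontrivial eigenvalue and that the bound $|\lambda| \geq \mathrm{Re}\,\lambda$ is applied only where the real part is nonnegative (which it is). One could alternatively run the computation through the identity $\tilde\lambda_{n,k} = \sum_{i=0}^{n-1}\X_{c^i}^k$ recorded before Observation~\ref{polygonobs}, but the direct route above is the shortest.
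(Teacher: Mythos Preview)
Your proof is correct and follows exactly the same route as the paper: pick $k=c^{n-2}$, observe that $\lambda_{c^{n-2}}=\tfrac1n\bigl((n-1)+\xi_c\bigr)$, bound its modulus below by $1-\tfrac{1-\cos(2\pi/c)}{n}$, and invoke Lemma~\ref{relaxlb}. Your version is in fact slightly cleaner, since you make the step $|\lambda|\geq\mathrm{Re}\,\lambda$ explicit (the paper's displayed inequality sign is reversed, evidently a typo).
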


\begin{proof}
For fixed $n>1$, the modulus of the $k=c^{n-2}$-th eigenvalue satisfies the inequality 
\[
\lv  \lambda_{c^{n-2}} \rv 
= \tfrac 1n \lv \X_c + n -1 \rv 
 \leq  1 - \tfrac{1-\cos(2 \pi /c)}{n}.
\]
So $\lambda_*=\max \{ \lv \lambda \rv : \lambda \text{ is an eigenvalue of } P , \lambda \neq 1 \} \geq 1 - \frac{1-\cos(2 \pi /c)}{n}$, thus by Lemma \ref{relaxlb},
\[
 t_{mix}(\epsilon)  \geq \left( \tfrac{n}{1-\cos(2 \pi/c)}  -1 \right) \log(\tfrac{1}{2\epsilon}).  \qedhere
\]
\end{proof}

\section{Conclusion}

We have shown that the order of the mixing time of random walks determined by a general 
linear recurrence exhibiting exponential growth is between $n / \log n$ and $n^2$. 
A situation that requires further study is the special case where the 
integer sequence defined by the linear recurrence exhibits polynomial 
growth instead. This occurs when the characteristic equation of the 
recurrence is $(1-x)^d$ for some $d \in \mathbb N.$ For this case, 
the result and proof of Theorem \ref{gencaseup} still holds and the 
corresponding upper bound on mixing time is on the order of $n \log n$.
However based on the computations of certain examples, we expect that 
the true mixing time of these random walks are likely bounded by a function of $\log n$.

Proving 
mixing times results for sequences of polynomial
growth seems to be related to some classic problems
in number theory.  For example, consider the following special case:
\begin{ques}
For fixed $k \in \mathbb N_{>1}$ and $n>1$ ranging, 
describe the mixing behavior of the random walk $(X_t)_{t\geq 0}$ 
with state space $\mathcal S = \mathbb Z_{n^k}$, initial state $X_0=0$, 
and where from the current state $X_t$, the next state is given by
\[
 X_{t+1}\equiv X_t + z^k \mod n^k, 
\]
with $z$ chosen from the set $\mathcal M = \{ 1,2,\ldots, n\} $ 
uniformly at random.
\end{ques}
The Hilbert-Waring theorem \cite{Hilbert1909} (which says that there is a function $g(k)$
such that every nonnegative integer can we written as a sum
of at most $g(k)$ $k$-th powers) guarantees that this 
Markov chain has a bounded diameter for all $n$.
The mixing time of the Markov chain appears to be related to the problem of 
determining the number of ways that a number can be written as the sum of
$l$ $k$-th powers.  This has complicated relations to theta functions.

\section*{Acknowledgments}
Caprice Stanley was partially supported by the US National Science Foundation (DMS 1615660).
Seth Sullivant was partially supported by the US National Science Foundation (DMS 1615660) and
by the David and Lucille Packard Foundation.

\end{document}